\m@th\displaystyle{##}$}{{##}\hfil}{\lbrace}{.}
\providecommand{\U}[1]{\protect\rule{.1in}{.1in}}
\newtheorem{theorem}{Theorem}[section]
\newtheorem{lemma}[theorem]{Lemma}
\newtheorem{definition}[theorem]{Definition}
\newtheorem{remark}[theorem]{Remark}
\newtheorem{corollary}[theorem]{Corollary}
\numberwithin{equation}{section}
\newcommand{\R}{\mathbb{R}}
\title[Estimates for solutions of critical elliptic equations]{$BMO$ and gradient estimates for solutions of critical elliptic equations}
\begin{document}

\author[Y.-W. Chen]{You-Wei Benson Chen}
\author[J. Manfredi]{Juan Manfredi}
\author[D. Spector]{Daniel Spector}

\address[Y.-W. Chen]{
National Taiwan University, Department of
Mathematics, Taipei 10617, TAIWAN.
}
\email{bensonchen.sc07@nycu.edu.tw}

\address[J. Manfredi]{Department of Mathematics,
University of Pittsburgh,
 Pittsburgh, PA 15260, USA}
\email{manfredi@pitt.edu}

\address[D. Spector]{Department of Mathematics, National Taiwan Normal University, No. 88, Section 4, Tingzhou Road, Wenshan District, Taipei City, Taiwan 116, R.O.C.
\newline
National Center for Theoretical Sciences\\No. 1 Sec. 4 Roosevelt Rd., National Taiwan
University\\Taipei, 106, Taiwan}
\email{spectda@protonmail.com}

\begin{abstract}
    In this paper we explore several applications of the recently introduced spaces of functions of bounded $\beta$-dimensional mean oscillation for $\beta \in (0,n]$ to regularity theory of critical exponent elliptic equations. \par
       We first show that functions with gradient in weak-$L^n$ are in $BMO^\beta$ for any $\beta \in (0,n]$, improving the classical result $\nabla u\in L^n$ implies $u\in BMO$. We apply this result to the Poisson equation $-\Delta u = \operatorname*{div} F$ with zero boundary conditions in a bounded $C^1$ domain to show that $u\in BMO^{\beta}$ when $F$ is in weak-$L^n$. Next, we consider the $n$-Laplace equation
    \begin{equation*}
    \left\{
\begin{split}
-\operatorname*{div}( |\nabla U|^{n-2} \nabla U) &= F  \mbox{ in }  \Omega,\\
 U  &=0   \mbox{ on }\partial \Omega.
\end{split}\right.
\end{equation*} with $F\in L^1(\Omega)$
and show that the classical result $u\in BMO$ can be improved to $u\in BMO^\beta$. Finally, we  consider the $n$-Laplace equation in the case when $F \in L^1$, $\operatorname*{div} F=0$  and prove that for smooth domains  $\Omega$ we have the estimate   \begin{align*}
\|\nabla U \|_{L^n} \mathbb \leq C \, \|F\|^{1/(n-1)}_{L^1},
\end{align*}
where the constant $C$ is independent of $F$. 
\end{abstract}
\maketitle
\section{Introduction and Main Results}

The space of functions of bounded mean oscillation appears readily in harmonic analysis in the study of the boundedness of translation invariant singular integral operators \cite{Peetre}*{Theorem 1.1 and Remark 1.3}, as an endpoint in interpolation \cite{FeffermanStein}*{Section III}, in PDE in the treatment of regularity theory of elliptic equations \cite{Moser1960}, and in Sobolev spaces in the consideration of critical exponent embeddings \cites{Adams1973, Cianchi, MS ,Trudinger, Yudovich}.

While its replacement of $L^\infty$ in harmonic analysis seems sharp, in the study of Sobolev embeddings one can obtain improvements.  In particular, from the trace theory one understands that Sobolev functions in the critical exponent are actually of bounded mean oscillation along hypersurfaces of any dimension $k=1,\ldots,n$:

When $kp=n$, 
\begin{align*}
    W^{k,p}(Q) \hookrightarrow BMO(Q).
\end{align*}
Yet a stronger conclusion can be deduced: 
 For a $n-1$ dimensional hyperplane $H$, trace theory and the Sobolev embedding applied to the trace yields
\begin{align*}
    W^{k,p}(Q) \hookrightarrow W^{k-1/p,p}(Q \cap H) \hookrightarrow BMO(Q \cap H).
\end{align*}

\vspace{.5cm}

\hspace{2cm}
\begin{tikzpicture}

    \def\sliceZ{1.8}
    \def\side{4.5}
    \def\zangle{65}

    \filldraw[color=gray!40] (0,\sliceZ,0) -- (0,\sliceZ,\side) -- (\side,\sliceZ,\side) -- (\side,\sliceZ,0) -- cycle;
    \draw[dashed] (0,\sliceZ,0) -- (0,\sliceZ,\side) -- (\side,\sliceZ,\side) -- (\side,\sliceZ,0) -- cycle;
 \draw[red] (0,\sliceZ,1.5) -- (\side,\sliceZ,1.5);
    
    \draw (\side,0,0) -- (\side,\side,0) node[midway,right] {} -- (0,\side,0);
    \draw (0,0,\side) -- (\side,0,\side) node[midway,below] {} -- (\side,\side,\side) -- (0,\side,\side) -- (0,0,\side);
    \draw (\side,0,0) -- (\side,0,\side) node[midway,below right] {};
    \draw (\side,\side,0) -- (\side,\side,\side);
    \draw (0,\side,0) -- (0,\side,\side);

    \node[cm={1,0,cos(\zangle),sin(\zangle),(0,0)}] at (2,\sliceZ,3){\color{blue}$W^{k-1/p,p}(Q \cap H)$};
    \node[cm={1,0,cos(\zangle),sin(\zangle),(0,0)}] at (2,\sliceZ,0.75){\color{red}$W^{k-2/p,p}(Q \cap l)$};

\end{tikzpicture}
\noindent

Iteration of this trace and embedding argument can be continued until one reaches a line $l$:
\begin{align*}
    W^{k,p}(Q) \hookrightarrow W^{k-1/p,p}(Q \cap H) \hookrightarrow \cdots \hookrightarrow W^{k-(n-1)/p,p}(Q \cap l) \hookrightarrow BMO(Q \cap l),
\end{align*}
and thus one sees that Sobolev functions in the critical exponent restrict as functions of bounded mean oscillation on lower dimensional hypersurfaces down to dimension one.

Meanwhile, from the perspective of trace integrability it was progressively understood  \cites{Adams1973,Cianchi,MS, Yudovich} that such functions are
exponentially integrable with respect to measures in the Morrey space $\mathcal{M}^\beta(Q)$  for any $\beta \in (0,n]$:  There exist $c_\beta,C_\beta>0$ such that
\begin{align*}
		\int_Q \exp\left(c_\beta \left|\frac{u(x)}{\| \nabla^k u\|_{L^p}}\right|^{p'}\right) \;d\mu \leq C_\beta
	\end{align*}
	for all $u \in W^{k,p}_0(Q)$ and $\mu$ is any locally finite Radon measure which satisfies the ball growth condition 
\begin{align*}
\|\mu\|_{\mathcal{M}^\beta(Q)} := \sup_{r>0,x\in \mathbb{R}^n} \frac{|\mu|(B(x,r) \cap Q)}{r^\beta} <+\infty.
\end{align*}

The improved properties of these special functions of bounded mean oscillation led the first and third authors in the recent paper \cite{Chen-Spector} to introduce spaces that capture these phenomena
 -- the so-called spaces of bounded $\beta$-dimensional mean oscillation $BMO^\beta(\mathbb{R}^n)$.  These spaces are refinements of the classical space of functions of bounded mean oscillation in the sense that 
\begin{enumerate}
    \item  If $0<\alpha< \beta \leq n$
\begin{align*}
    BMO^\alpha(\mathbb{R}^n) \subset BMO^\beta(\mathbb{R}^n) \subset BMO^n(\mathbb{R}^n) \equiv BMO(\mathbb{R}^n)
\end{align*}
\item   If $u \in BMO^\beta(\mathbb{R}^n)$, then for any $k\geq \beta$, $u|_H \in BMO(\mathbb{R}^k)$ for any $k$ dimensional hyperplane $H$.
\item We have the John-Nirenberg inequality
\begin{align*}
\mathcal{H}^{\beta}_\infty\left(\{x\in Q:|u(x)-c_Q|>t\}\right) \leq C\, l(Q)^\beta \exp(-c\,t/\|u\|_{BMO^\beta(\mathbb{R}^n)}).
\end{align*}
\end{enumerate}
For precise definitions of the Hausdorff content and $BMO^\beta(\mathbb{R}^n)$, we refer the reader to Section \ref{preliminaries}.

The goal of the present paper is to show how these spaces can give improvements to certain estimates for PDE. To this end, let $\Omega \subset \mathbb{R}^n$ be a $C^1$-domain,  suppose $F \in L^n(\Omega;\mathbb{R}^n)$,  and consider the problem
\begin{equation}\label{div_poisson}\left\{
\begin{array}{rccl}
-\Delta u  &=&  \operatorname*{div}F & \mbox{ in }  \Omega,\\
 u  &= &0   &\mbox{ on }\partial \Omega.
\end{array} \right.
\end{equation}

The classical theory for this equation is the transfer of $L^p$ regularity from $F$ to $\nabla u$:  For $1<p<+\infty$, one has
\begin{align}\label{elliptic_regularity}
\Vert \nabla u \Vert_{L^{p}(\Omega;\mathbb{R}^n) }  \leq C \Vert F\Vert_{L^{p}(\Omega;\mathbb{R}^n) }, 
  \end{align}
see e.g. \cite[Theorem 1]{AQ2002} or \cite{Byun}*{Theorem 1.5}.  Therefore when $F \in L^n(\Omega;\mathbb{R}^n)$, one has $\nabla u  \in L^n(\Omega;\mathbb{R}^n)$ from which one deduces $u \in BMO(\Omega)$.  Our first result is a refinement of this estimate for spaces $BMO^\beta(\Omega)$, which we define in analogy with those introduced in \cite{Chen-Spector}, but now only taken into account the behavior on a bounded Lipschitz domain $\Omega \subset \mathbb{R}^n$:  For a scalar function $u \in L^1(\Omega;\mathcal{H}^\beta_\infty)$, we introduce the quasi-norm
\begin{align*}
\|u\|_{BMO^{\beta}(\Omega)} \vcentcolon=\sup_{2Q \subset \Omega} \inf_{c \in \mathbb{R}} \frac{1}{l(Q)^\beta}  \int_{Q}  |u-c| \;d\mathcal{H}^{\beta}_\infty,
\end{align*}
where the supremum is taken over all finite subcubes $Q \subseteq \Omega$.  Whether one takes the supremum over cubes or balls is equivalent, provided they stay away from the boundary.  The definition for vector-valued functions is defined analogously, componentwise.  We then define the space
\begin{align*}
BMO^\beta(\Omega):= \left\{ u \in L^1_{loc}(\Omega;\mathcal{H}^\beta_\infty) \vcentcolon  \|u\|_{BMO^{\beta}(\Omega)}<+\infty \right\}.
\end{align*}
Actually, the definition of $BMO^\beta(\Omega)$ make sense for an arbitrary open set, though with a focus on applications to PDE, in this paper we always  assume $\Omega$ is as smooth as needed. 

With this definition, an adaptation of the arguments in \cite{Chen-Spector} to the setting of a bounded domain would yield that for $u$ which satisfies \eqref{div_poisson} one has  $u \in BMO^{n-1+\epsilon}(\Omega)$ for any $\epsilon>0$.  However, as discussed above one expects at least an embedding into $BMO^1(\Omega)$ from the trace theory.  The first result of this paper is the following theorem confirming this intuition, that such functions are of bounded $\beta$-dimensional mean oscillation for any $\beta>0$.

\begin{theorem}\label{poisson_thm}
 Let $\Omega$ be a bounded $C^{1}$-domain.  Suppose that $u$ satisfies \eqref{div_poisson} and $F \in L^{n,\infty}(\Omega;\mathbb{R}^n)$.  Then $u \in BMO^\beta(\Omega)$ for any $\beta \in (0,n]$.
\end{theorem}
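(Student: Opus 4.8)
The plan is to split the argument into two parts: first, that the solution $u$ of \eqref{div_poisson} inherits $\nabla u\in L^{n,\infty}(\Omega;\mathbb{R}^n)$ from $F\in L^{n,\infty}(\Omega;\mathbb{R}^n)$; and second, the ``weak-$L^n$ gradient'' principle announced in the abstract, namely that if $v\in W^{1,1}_{loc}(\Omega)$ has $\nabla v\in L^{n,\infty}(\Omega)$ then $v\in BMO^\beta(\Omega)$ for every $\beta\in(0,n]$. Applying the second part with $v=u$ then finishes the proof, so the real work is in establishing these two statements.

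For the first part, the solution operator $T\colon F\mapsto\nabla u$ is linear and, by the standard elliptic estimate \eqref{elliptic_regularity} — valid on bounded $C^1$ domains for every $p\in(1,\infty)$, see \cite{AQ2002}, \cite{Byun} — bounded from $L^p(\Omega;\mathbb{R}^n)$ to itself for all such $p$. Fixing $1<p_0<n<p_1<\infty$, boundedness of $\Omega$ gives $L^{n,\infty}(\Omega)\hookrightarrow L^{p_0}(\Omega)$, so $T$ is defined on $L^{n,\infty}(\Omega)$, and by real (Marcinkiewicz) interpolation — using $\bigl(L^{p_0}(\Omega),L^{p_1}(\Omega)\bigr)_{\theta,\infty}=L^{n,\infty}(\Omega)$ for the $\theta$ with $\tfrac1n=\tfrac{1-\theta}{p_0}+\tfrac{\theta}{p_1}$ — $T$ is bounded from $L^{n,\infty}(\Omega)$ to $L^{n,\infty}(\Omega)$. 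Hence $\nabla u\in L^{n,\infty}(\Omega;\mathbb{R}^n)$ with $\|\nabla u\|_{L^{n,\infty}(\Omega)}\lesssim\|F\|_{L^{n,\infty}(\Omega)}$.

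For the second part, fix a cube $Q$ with $2Q\subset\Omega$ and let $v_Q$ be the average of $v$ over $Q$. The pointwise Poincaré inequality gives, for a.e.\ $x\in Q$,
\begin{equation*}
|v(x)-v_Q|\le C_n\int_Q\frac{|\nabla v(y)|}{|x-y|^{n-1}}\,dy=C_n\,I_1(\chi_Q|\nabla v|)(x),
\end{equation*}
so it suffices to prove the Riesz potential estimate
\begin{equation*}
\int_Q I_1(\chi_Q g)\,d\mathcal{H}^\beta_\infty\le C(n,\beta)\,l(Q)^\beta\,\|g\|_{L^{n,\infty}(Q)}\qquad(g\ge 0).
\end{equation*}
By the layer-cake formula for Hausdorff content this reduces to the John–Nirenberg-type decay $\mathcal{H}^\beta_\infty\bigl(\{x\in Q:I_1(\chi_Qg)(x)>t\}\bigr)\le C\,l(Q)^\beta\exp\bigl(-c\,t/\|g\|_{L^{n,\infty}(Q)}\bigr)$, which I would obtain from a stopping-time (good-$\lambda$) decomposition of $Q$ adapted to $g$: at each dyadic scale one combines the Lorentz–Hölder bound $\int_A g\le C|A|^{1-1/n}\|g\|_{L^{n,\infty}}$ with the subadditivity and scaling $\mathcal{H}^\beta_\infty\bigl(\bigcup_i E_i\bigr)\le\sum_i(\operatorname{diam}E_i)^\beta$ of the content. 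Combined with the first part applied to $u$, this yields $u\in BMO^\beta(\Omega)$ for all $\beta\in(0,n]$.

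The main obstacle is precisely the content estimate in the second part. The pointwise bound for $I_1(\chi_Qg)$ is only logarithmically divergent, so by itself it recovers $u\in BMO=BMO^n$ but not $u\in BMO^\beta$ for small $\beta$; and the naive argument that exchanges the order of integration in $\int_Q I_1(\chi_Qg)\,d\mathcal{H}^\beta_\infty$ yields the desired bound only for $\beta>n-1$, since $\sup_{y\in Q}\int_Q|x-y|^{1-n}\,d\mathcal{H}^\beta_\infty(x)$ is finite — comparable to $l(Q)^{\beta-n+1}$ — exactly in that range, which is the weaker conclusion $u\in BMO^{n-1+\epsilon}(\Omega)$ mentioned in the introduction. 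Reaching every $\beta>0$ forces one to exploit the weak-$L^n$ smallness of $\nabla u$ at all scales simultaneously, which is what the stopping-time argument accomplishes. Finally, note the boundary of $\Omega$ plays no role in the second part, since the $BMO^\beta(\Omega)$ quasinorm only involves cubes $Q$ with $2Q\subset\Omega$; the $C^1$ hypothesis enters only through \eqref{elliptic_regularity} in the first part.
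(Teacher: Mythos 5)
Your high-level structure matches the paper exactly: reduce to $\nabla u \in L^{n,\infty}(\Omega;\mathbb{R}^n)$ by interpolating the $L^p$ elliptic estimate \eqref{elliptic_regularity}, then invoke a ``gradient in weak-$L^n$ implies $BMO^\beta$'' principle, which is the paper's Theorem~\ref{BMObetabyPoincare}. Step one is correct and is precisely what the paper does in the discussion following the theorem.

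The gap is in step two, and it is the one you flag yourself. The paper proves Theorem~\ref{BMObetabyPoincare} by a different and much shorter route: the Choquet--Poincar\'e--Sobolev inequality of Harjulehto--Hurri-Syrj\"anen (Lemma~\ref{poincare}), which for $\beta = n-\sigma p$ with $p<n$ controls $\|u-u_Q\|_{L^{p\beta/(n-p)}(Q;\mathcal{H}^\beta_\infty)}$ by $\|\nabla u\|_{L^p(Q)}$; one then takes $p$ just below $n$, lowers the left-hand exponent to $1$ by H\"older for the content, and bounds $\|\nabla u\|_{L^p(Q)}$ by $\|\nabla u\|_{L^{n,\infty}(Q)}$ via Lorentz H\"older, with the powers of $\ell(Q)$ combining to give exactly $\ell(Q)^\beta$. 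No stopping-time construction appears. Your replacement --- prove the exponential content decay $\mathcal{H}^\beta_\infty(\{I_1(\chi_Q g)>t\}) \lesssim \ell(Q)^\beta e^{-ct/\|g\|_{L^{n,\infty}}}$ by a Calder\'on--Zygmund/good-$\lambda$ decomposition --- is only announced, and it targets a strictly stronger statement than the $L^1(\mathcal{H}^\beta_\infty)$ bound you need (it is the John--Nirenberg inequality for $BMO^\beta$, which in \cite{Chen-Spector} is derived \emph{from} $BMO^\beta$ membership, not towards it). Making the sketch rigorous is not routine: the classical good-$\lambda$ iteration sums the measure of the stopping cubes selected at a given scale and compares that sum to the measure of their union, which relies on near-additivity; $\mathcal{H}^\beta_\infty$ is merely countably subadditive and is strongly non-doubling, so $\sum_i\mathcal{H}^\beta_\infty(Q_i)$ can exceed $\mathcal{H}^\beta_\infty(\bigcup_i Q_i)$ by an unbounded factor (e.g.\ a tight packing of small cubes inside a fixed cube), and the iteration does not close as in the Lebesgue setting. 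This is exactly the obstacle that the Lemma~\ref{poincare}--plus--H\"older route avoids, by never passing through a distributional inequality at all. As written your proposal is a correct outline for the first half and an unproved plan for the second, and substituting the paper's Poincar\'e--H\"older argument for your good-$\lambda$ step would complete it.
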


Observe that Theorem \ref{poisson_thm} only requires $F \in L^{n,\infty}(\Omega;\mathbb{R}^n)$, the Marcinkiewicz space weak-$L^n$.  That such an assumption is sufficient follows in part by interpolation of \eqref{elliptic_regularity} to deduce that the solution of \eqref{div_poisson} for such $F$ admits the estimate
\begin{align*}
\Vert \nabla u \Vert_{L^{n,\infty}(\Omega;\mathbb{R}^n) }  \leq C \Vert F\Vert_{L^{n,\infty}(\Omega;\mathbb{R}^n) },
 \end{align*}
and from our
\begin{theorem}\label{BMObetabyPoincare}
    Let $0 < \beta \leq n \in \mathbb{N}$ and $\Omega \subseteq \mathbb{R}^n$ be an open set. 
 There exists a constant $C=C(\beta,n)>0$ such that 
   \begin{align}
       \Vert u \Vert_{BMO^\beta(\Omega) } \leq C \Vert \nabla u \Vert_{L^{n, \infty} (\Omega;\mathbb{R}^n)}
   \end{align}
   for all $u \in W^{1,1}_{loc}(\Omega)$ such that $\nabla u \in L^{n, \infty} (\Omega;\mathbb{R}^n)$.
\end{theorem}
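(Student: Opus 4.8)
The plan is to combine a pointwise Poincar\'e inequality with a bound on the Choquet integral of a Riesz potential against the Hausdorff content $\mathcal{H}^\beta_\infty$. Fix a cube $Q$ with $2Q\subset\Omega$. Since $\nabla u\in L^{n,\infty}(\Omega)\subset L^1_{loc}(\Omega)$ and $u\in L^1_{loc}(\Omega)$, we have $u\in W^{1,1}(Q)$, so the classical potential representation on a convex set gives, for a.e.\ $x\in Q$,
\begin{equation*}
|u(x)-u_Q|\le C_n\int_Q\frac{|\nabla u(y)|}{|x-y|^{n-1}}\,dy = C_n\,I_1\bigl(|\nabla u|\chi_Q\bigr)(x),
\end{equation*}
where $u_Q$ is the mean value of $u$ over $Q$. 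Using $c=u_Q$ in the definition of $\|u\|_{BMO^\beta(\Omega)}$, it therefore suffices to prove the scale-invariant estimate $\int_Q I_1(f\chi_Q)\,d\mathcal{H}^\beta_\infty\le C(\beta,n)\,l(Q)^\beta\,\|f\|_{L^{n,\infty}(\Omega)}$ for nonnegative $f\in L^{n,\infty}(\Omega)$, applied to $f=|\nabla u|$; here one uses that $\|f\chi_Q\|_{L^{n,\infty}}\le\|f\|_{L^{n,\infty}(\Omega)}$ since restriction does not increase the distribution function.

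For this estimate I would first pass from weak-$L^n$ to an ordinary Lebesgue space on the bounded cube: for any $1<p<n$ one has the elementary distribution-function bound $\|f\chi_Q\|_{L^p}\le C(p,n)\,l(Q)^{n/p-1}\|f\|_{L^{n,\infty}}$. One then invokes the Adams-type mapping property of the Riesz potential on Hausdorff-content spaces (in the form adapted in \cite{Chen-Spector}): for $1<p<n$ and $q=\beta p/(n-p)$,
\begin{equation*}
\Bigl(\int_{\mathbb{R}^n}\bigl(I_1h\bigr)^q\,d\mathcal{H}^\beta_\infty\Bigr)^{1/q}\le C(p,\beta,n)\,\|h\|_{L^p(\mathbb{R}^n)},
\end{equation*}
which itself rests on a Hedberg-type pointwise bound together with the weak-type estimate for the maximal function measured in $\mathcal{H}^\beta_\infty$. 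Since $q\to\infty$ as $p\to n^-$, we may choose $p\in(\max(1,n/(\beta+1)),n)$, depending only on $n$ and $\beta$, so that $q\ge1$; combining the $L^p$ inclusion, this Choquet--Riesz inequality, and H\"older's inequality for Choquet integrals,
\begin{equation*}
\int_Q I_1(f\chi_Q)\,d\mathcal{H}^\beta_\infty\le\mathcal{H}^\beta_\infty(Q)^{1-1/q}\,\bigl\|I_1(f\chi_Q)\bigr\|_{L^q(\mathcal{H}^\beta_\infty)}\le C\,l(Q)^{\beta(1-1/q)}\,\|f\chi_Q\|_{L^p}.
\end{equation*}
Because $\beta(1-1/q)=\beta+1-n/p$, inserting the $L^p$ bound and bookkeeping the powers of $l(Q)$ gives exactly $C(\beta,n)\,l(Q)^\beta\|f\|_{L^{n,\infty}}$; taking the supremum over $Q$ with $2Q\subset\Omega$ completes the proof.

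The main obstacle is the Choquet--Riesz inequality: for $\beta=n$ the argument collapses to the classical Sobolev--Poincar\'e inequality and can be done by hand, but for $\beta<n$ one genuinely needs that $I_1$ maps $L^p$ into the Choquet space $L^q(\mathcal{H}^\beta_\infty)$, and care is required because $\mathcal{H}^\beta_\infty$ is only subadditive, so the layer-cake formula and H\"older's inequality must be used in their Choquet-integral versions (this is exactly the machinery of \cite{Chen-Spector} that one adapts to a bounded domain). A secondary point is to keep all constants dependent only on $n$ and $\beta$, which is why $p$ is chosen as an explicit function of $n$ and $\beta$; the matching of the homogeneities in $l(Q)$ is forced by the scalings $I_1(h(\cdot/\lambda))=\lambda\,(I_1h)(\cdot/\lambda)$ and $\mathcal{H}^\beta_\infty(\lambda E)=\lambda^\beta\mathcal{H}^\beta_\infty(E)$, and serves as a useful consistency check.
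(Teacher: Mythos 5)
Your argument follows essentially the same template as the paper's: establish a Choquet--Poincar\'e inequality at some exponent $q=\beta p/(n-p)>1$ with $p<n$ (the paper quotes this as Lemma~\ref{poincare}, taken from \cite{Petteri_2023}, while you re-derive it from the pointwise potential representation $|u(x)-u_Q|\lesssim I_1(|\nabla u|\chi_Q)(x)$ plus the Adams-type mapping $I_1:L^p\to L^q(\mathcal{H}^\beta_\infty)$), then downgrade the left-hand side from $L^q(\mathcal{H}^\beta_\infty)$ to $L^1(\mathcal{H}^\beta_\infty)$ by H\"older for Choquet integrals, and downgrade the right-hand side from $L^p$ to $L^{n,\infty}$ using the boundedness of $Q$, with the powers of $\ell(Q)$ matching by homogeneity. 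The structure, the choice of auxiliary $p<n$ near $n$, and the bookkeeping are all the same; the only genuine difference is that you unpack the black box.

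Two points would need patching to make the write-up rigorous. First, the pointwise bound $|u(x)-u_Q|\le C_nI_1(|\nabla u|\chi_Q)(x)$ holds only at Lebesgue points of $u$, hence for Lebesgue-a.e.\ $x$, whereas the Choquet integral $\int_Q\cdot\,d\mathcal{H}^\beta_\infty$ sees sets of Lebesgue measure zero. One must therefore work with the precise representative $u^*$ and argue that (because $\nabla u\in L^{n,\infty}\subset L^p$ for every $p<n$) the non-Lebesgue points form an $\mathcal{H}^\beta_\infty$-null set for every $\beta>0$; the paper handles exactly this in the proof of Lemma~\ref{poincare} by approximation and a quasicontinuity/Lebesgue-point result. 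Second, your constraint $p>\max(1,n/(\beta+1))$ guarantees $q\ge1$ but not necessarily $q>p$ (equivalently $p>n-\beta$), which is a standing hypothesis in the Adams--Choquet trace inequality you invoke; for $\beta\le n-1$ one has $n-\beta\ge n/(\beta+1)$, so the constraint must be tightened to $p>\max(1,n-\beta,n/(\beta+1))$, still an explicit function of $n$ and $\beta$ with $p<n$, so the fix is cosmetic.
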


\begin{remark}
The conclusion of Theorem \ref{poisson_thm} holds in a greater generality than simply the Poisson equation, for example, for linear equations which admit $L^p$ elliptic regularity estimates:  For any $1<p<+\infty$, there exist $\delta$ small and $R>0$ such that if $A$ is uniformly elliptic, $(\delta,R)$-vanishing $A$ and $\Omega$ a $(\delta,R)$-Lipschitz domain, as the solution to
\begin{equation}\label{div_A}
 \begin{split}
-\operatorname*{div}A(x)\nabla u & =  \operatorname*{div}F  \mbox{ in } \Omega,\\
 u  &= 0   \mbox{ on } \partial \Omega,
\end{split} 
\end{equation}
admits the estimate \eqref{elliptic_regularity}, see e.g. \cite{Byun}*{Theorem 1.5}. Notice that the hypothesis on $\Omega$ hold when
 $\Omega$ is a bounded $C^{1}$-domain. 
\end{remark}

The passage from $L^{n}(\Omega;\mathbb{R}^n)$ to $L^{n,\infty}(\Omega;\mathbb{R}^n)$ in Theorem \ref{BMObetabyPoincare} and the known inclusions
\begin{align*}
L^n(\Omega) \subset L^{n,\infty}(\Omega) \subset \mathcal{M}^{n-1}(\Omega)    
\end{align*}
suggest that it should be interesting to study the case $u \in W^{1,1}_{loc}(\Omega)$ is such that $\nabla u \in \mathcal{M}^{n-1}(\Omega)$, see e.g.~N. Trudinger's exponential integrability results under these assumptions
\cite{Trudinger}*{Theorem 1 on p.~476}. While Theorem \ref{BMObetabyPoincare} shows that even for functions with derivatives in the weak $L^n$ one has an embedding into $BMO^\beta$ for arbitrarily small $\beta>0$,  this is not the case of $\nabla u \in \mathcal{M}^{n-1}$.  In particular, in \cite{Chen-Spector} it was shown that one could have any $\beta>n-1,$ while the embedding fails for $\beta<n-1$.  We here recover the endpoint in the following
\begin{theorem}\label{MorreyBMO}
   There exists a constant $C= C(n)$ such that for  $u \in W^{1,1}_{loc}$ we have
\begin{align}\label{Poincare1}
    \inf_{c \in \mathbb{R}} \int_B |u-c|\; d\mathcal{H}^{n-1}_\infty \leq C \int_B |\nabla u | \; dx 
\end{align}
holds for any open ball $B\subseteq \Omega$.  Moreover, if $|\nabla u | \in \mathcal{M}^{n-1}(\Omega)$, then
\begin{align}\label{Morreyn1eestimate}
    \Vert u \Vert_{BMO^{n-1} (\Omega)} \leq C \Vert \nabla u \Vert_{\mathcal{M}^{n-1}(\Omega)}.
\end{align}
Here, $u$ on the left hand side is understood as its precise representative (see (\ref{preciserepresentative}) for definition).
\end{theorem}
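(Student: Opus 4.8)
The plan is to prove the Poincaré-type inequality \eqref{Poincare1} first and then deduce \eqref{Morreyn1eestimate} from it by a routine covering argument. It is worth noting at the outset that the naive route --- combine the pointwise bound, which controls $|u(x)-\frac{1}{|B|}\int_B u\,dy|$ by a constant times $I_1(|\nabla u|\chi_B)(x)$, with a Fubini-type exchange --- is too lossy here, since the kernel $|x-y|^{1-n}$ is not $d\mathcal{H}^{n-1}_\infty$-integrable near the diagonal ($\int_B |x-y|^{1-n}\,d\mathcal{H}^{n-1}_\infty(x)$ diverges logarithmically at $x=y$). One therefore argues through the coarea formula, which retains the cancellation that the pointwise Riesz bound discards.

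\textbf{Step 1 (global inequality).} I would first record the classical fact that
\[
\int_{\mathbb{R}^n}|v|\,d\mathcal{H}^{n-1}_\infty\le C_n\,\|Dv\|(\mathbb{R}^n)
\]
for every $v\in BV(\mathbb{R}^n)$ with $|\{v\ne0\}|<\infty$, where $v$ on the left denotes the precise representative. This follows from the boxing inequality (Gustin; see also Federer and Meyers--Ziemer), which gives $\mathcal{H}^{n-1}_\infty(E^{(1)})\le C_n\,P(E)$ for every set $E$ of finite perimeter and finite measure, $E^{(1)}$ being the set of points of density one: one writes
\[
\int_{\mathbb{R}^n}|v|\,d\mathcal{H}^{n-1}_\infty=\int_0^\infty\Big(\mathcal{H}^{n-1}_\infty(\{v>t\})+\mathcal{H}^{n-1}_\infty(\{v<-t\})\Big)\,dt,
\]
which is legitimate for the precise representative because $\{v>t\}$ and $\{v>t\}^{(1)}$ differ by an $\mathcal{H}^{n-1}$-null, hence $\mathcal{H}^{n-1}_\infty$-null, set, applies the boxing inequality to each superlevel set, and integrates using the coarea identity $\|Dv\|(\mathbb{R}^n)=\int_{-\infty}^\infty P(\{v>t\})\,dt$.

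\textbf{Step 2 (localization, proof of \eqref{Poincare1}).} Inequality \eqref{Poincare1} is a statement about a single ball, so fix an open ball $B\subseteq\Omega$; if $\int_B|\nabla u|\,dx=\infty$ there is nothing to prove, and if $\overline{B}\subset\Omega$ then $u\in W^{1,1}(B)$. Set $u_B:=\frac{1}{|B|}\int_B u\,dx$; the Poincaré inequality on $B$ gives $\|u-u_B\|_{L^1(B)}\le C_n\,r_B\,\|\nabla u\|_{L^1(B)}$, where $r_B$ is the radius. Applying a Sobolev extension operator for the ball to $u-u_B$ and multiplying by a cutoff $\eta$ with $\eta\equiv1$ on $B$, $\supp\eta\subset 2B$, $|\nabla\eta|\le C/r_B$, produces $v\in W^{1,1}(\mathbb{R}^n)$ of compact support with $v=u-u_B$ on $B$ and $\|Dv\|(\mathbb{R}^n)\le C_n\int_B|\nabla u|\,dx$ --- this is where subtracting the mean is essential, as it kills the would-be $L^1$-remainder from $\nabla\eta$ via the Poincaré bound above. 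Then Step 1 yields
\[
\inf_{c\in\mathbb{R}}\int_B|u-c|\,d\mathcal{H}^{n-1}_\infty\le\int_B|v|\,d\mathcal{H}^{n-1}_\infty\le\int_{\mathbb{R}^n}|v|\,d\mathcal{H}^{n-1}_\infty\le C_n\int_B|\nabla u|\,dx .
\]
For a general open ball $B\subseteq\Omega$ one exhausts $B$ by concentric balls $B_k$ of radius $(1-1/k)r_B$ with $\overline{B_k}\subset B$, applies the above to each $B_k$, and passes to the limit: the near-optimal constants $c_k$ stay bounded (otherwise $\int_{B_{k_0}}|u-c_k|\,d\mathcal{H}^{n-1}_\infty\to\infty$ for fixed $k_0$), and a subsequential limit together with monotone convergence of the Choquet integrals in the domain of integration gives \eqref{Poincare1}. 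This also confirms that the correct representative of $u$ is the precise one.

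\textbf{Step 3 (from \eqref{Poincare1} to \eqref{Morreyn1eestimate}).} Given a cube $Q$ with $2Q\subset\Omega$, pick a ball $B$ with $Q\subset B\subseteq\Omega$ and $r_B\le C_n\,l(Q)$. By \eqref{Poincare1} on $B$, monotonicity of the Choquet integral in its domain of integration, and the definition of the Morrey norm,
\[
\inf_{c\in\mathbb{R}}\int_Q|u-c|\,d\mathcal{H}^{n-1}_\infty\le\inf_{c\in\mathbb{R}}\int_B|u-c|\,d\mathcal{H}^{n-1}_\infty\le C_n\int_B|\nabla u|\,dx\le C_n\,r_B^{\,n-1}\|\nabla u\|_{\mathcal{M}^{n-1}(\Omega)}\le C_n\,l(Q)^{n-1}\|\nabla u\|_{\mathcal{M}^{n-1}(\Omega)} .
\]
Dividing by $l(Q)^{n-1}$ and taking the supremum over admissible cubes gives \eqref{Morreyn1eestimate}. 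With the boxing inequality granted, the only delicate points are the measure-theoretic bookkeeping --- working throughout with the precise representative so that superlevel sets, the extension, and the cutoff are compatible modulo $\mathcal{H}^{n-1}$-null sets --- and the absence of an $L^1$-remainder in the bound $\|Dv\|(\mathbb{R}^n)\le C_n\int_B|\nabla u|$, which is exactly what subtracting the mean secures. For a self-contained treatment the substantive step would be the boxing inequality $\mathcal{H}^{n-1}_\infty(E^{(1)})\le C_n P(E)$ itself, whose proof is a stopping-time covering argument driven by the relative isoperimetric inequality.
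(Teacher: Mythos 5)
Your proof is correct, but it takes a genuinely different route from the paper's. The paper localizes first and applies the duality machinery it has already set up: fixing a ball and a point $b=b(u)$, it cites the Meyers--Ziemer Poincar\'e inequality (Theorem 5.13.2 in Ziemer's book), which gives $\int_B |u-b|\,d\mu \leq C\int_B|\nabla u|\,dx$ uniformly over all measures $\mu$ with $\mu(B(x,r))\leq r^{n-1}$, and then recovers the Choquet integral on the left by taking a supremum over such $\mu$ via the duality $\int|f|\,d\mathcal H^{n-1}_\infty \asymp \sup_{\|\mu\|_{\mathcal M^{n-1}}\leq 1}\int f\,d\mu$ (relation (2.3)). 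You instead go through the Choquet integral directly: layer-cake decomposition, Gustin's boxing inequality $\mathcal H^{n-1}_\infty(E^{(1)})\leq C_nP(E)$, and the coarea formula give the global estimate $\int|v|\,d\mathcal H^{n-1}_\infty\leq C\|Dv\|$, and you then localize by scale-invariant extension plus cutoff, with Poincar\'e subtraction absorbing the cutoff remainder. The two approaches rest on the same ultimate foundation --- the Ziemer theorem is itself proved via boxing and coarea --- but the paper's version is shorter because it outsources the hard part and exploits the duality, whereas yours is more self-contained and avoids any reliance on the uniformity of $b$ across the measures $\mu$, at the cost of explicitly tracking precise representatives and the extension.

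One small imprecision worth flagging in your Step 1: for general $v\in BV$, it is not true that $\{v^*>t\}$ and $\{v>t\}^{(1)}$ differ only by an $\mathcal H^{n-1}$-null set; they can also differ on the portion of the jump set $J_v$ where $v^-(x)\leq t < \tfrac12(v^++v^-)(x)$, and $J_v$ can carry positive $\mathcal H^{n-1}$ mass. The argument survives because $\partial^*\{v>t\}$ has $\mathcal H^{n-1}_\infty$-content at most $P(\{v>t\})$, so it can be absorbed into the boxing estimate --- but as stated the set-equality claim is not correct for BV. In your application the function $v$ constructed in Step 2 lies in $W^{1,1}(\mathbb R^n)$, where the approximate discontinuity set is $\mathcal H^{n-1}$-null and the claim does hold, so no harm is done; you should just either restrict Step 1 to $W^{1,1}$ or correct the statement for BV.
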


That one has such a result in the first order case suggests investigation of the higher order analogue.  In this regime we prove
\begin{theorem}\label{MorreyBMO2}
    Let $\Omega$ be an open bounded set in $\mathbb{R}^n$, $1 \leq k \leq n-1$ and $u \in W^{k,1}_{0} (\Omega)$. If $| \nabla^j u| \in \mathcal{M}^{n-j} (\Omega)$ for $1 \leq j  \leq k$, then there exists a constant $C$ such that 
\begin{align}\label{BMO2estimate}
    \Vert u \Vert_{BMO^{n-k}(\Omega)} \leq C \sum_{j=1}^{k} \Vert \nabla^j u \Vert_{\mathcal{M}^{n-j}(\Omega)},
\end{align}
where $u$ on the left hand side is understood as its precise representative (see (\ref{preciserepresentative}) for definition).
\end{theorem}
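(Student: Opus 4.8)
The plan is to reduce \eqref{BMO2estimate} to a single local estimate of Bramble--Hilbert type at the level of Hausdorff content, and then prove that estimate by induction on $k$. For a cube $Q$ with $2Q\subseteq\Omega$ let $T^{(m)}_Q$ denote the averaged Taylor projection onto polynomials of degree $\le m$ on $Q$; recall that $\nabla\,T^{(m)}_Q=T^{(m-1)}_Q\,\nabla$, that $T^{(m)}_Q$ fixes polynomials of degree $\le m$, that $\|T^{(m)}_Q g\|_{L^\infty(Q)}\lesssim l(Q)^{-n}\|g\|_{L^1(2Q)}$ (averaged Taylor polynomials admit a kernel representation with kernel $\lesssim l(Q)^{-n}$), and the $L^1$ Bramble--Hilbert bound $\|u-T^{(k-1)}_Q u\|_{L^1(Q)}\lesssim l(Q)^k\|\nabla^k u\|_{L^1(2Q)}$. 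The central claim is the \emph{content Bramble--Hilbert inequality}
\begin{equation}\label{cBH}
  \int_Q \bigl|u-T^{(k-1)}_Q u\bigr|\;d\mathcal{H}^{n-k}_\infty \;\le\; C(n,k)\int_Q|\nabla^k u|\;dx .
\end{equation}
Granting \eqref{cBH}, choose $c=(T^{(k-1)}_Q u)(x_Q)$ in the $BMO^{n-k}$ seminorm; then $T^{(k-1)}_Q u-c$ is a polynomial vanishing at the center, so on $Q$ it is bounded by $l(Q)\|\nabla T^{(k-1)}_Q u\|_{L^\infty(Q)}=l(Q)\|T^{(k-2)}_Q\nabla u\|_{L^\infty(Q)}\lesssim l(Q)^{1-n}\|\nabla u\|_{L^1(2Q)}\lesssim\|\nabla u\|_{\mathcal{M}^{n-1}(\Omega)}$, using the Morrey bound $\int_{2Q}|\nabla u|\lesssim l(Q)^{n-1}\|\nabla u\|_{\mathcal{M}^{n-1}}$. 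Since $\mathcal{H}^{n-k}_\infty(Q)\approx l(Q)^{n-k}$, $\int_Q|\nabla^k u|\,dx\le l(Q)^{n-k}\|\nabla^k u\|_{\mathcal{M}^{n-k}(\Omega)}$, and the Choquet integral is subadditive,
\begin{align*}
  \inf_{c\in\mathbb{R}}\int_Q|u-c|\;d\mathcal{H}^{n-k}_\infty &\le \int_Q\bigl|u-T^{(k-1)}_Q u\bigr|\,d\mathcal{H}^{n-k}_\infty + \int_Q\bigl|T^{(k-1)}_Q u-c\bigr|\,d\mathcal{H}^{n-k}_\infty \\ &\lesssim l(Q)^{n-k}\Bigl(\|\nabla^k u\|_{\mathcal{M}^{n-k}(\Omega)}+\|\nabla u\|_{\mathcal{M}^{n-1}(\Omega)}\Bigr);
\end{align*}
dividing by $l(Q)^{n-k}$ and taking the supremum over admissible $Q$ yields \eqref{BMO2estimate} (in fact with only the $j=1$ and $j=k$ terms on the right). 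As in Theorem~\ref{MorreyBMO}, $u$ on the left is read along its precise representative.

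To prove \eqref{cBH} by induction on $k$: the case $k=1$, where $T^{(0)}_Q u=u_Q$, is exactly the content--Poincar\'e inequality \eqref{Poincare1} of Theorem~\ref{MorreyBMO}. For the inductive step I would use a \emph{one-step content--Poincar\'e inequality}
\begin{equation}\label{os}
  \int_Q|w-w_Q|\;d\mathcal{H}^{\beta-1}_\infty \;\le\; C(n,\beta)\int_Q|\nabla w|\;d\mathcal{H}^{\beta}_\infty,\qquad w\in W^{1,1}(Q),\ \beta\in(1,n],
\end{equation}
which for $\beta=n$ (where $\mathcal{H}^n_\infty$ is comparable with Lebesgue measure) is \eqref{Poincare1}. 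Assuming \eqref{cBH} at order $k-1$ and \eqref{os} at $\beta=n-k+1$, set $w:=u-T^{(k-1)}_Q u$, so $\nabla^k w=\nabla^k u$, $T^{(k-1)}_Q w=0$, and hence $T^{(k-2)}_Q(\nabla w)=\nabla T^{(k-1)}_Q w=0$; then, applying \eqref{os} to $w$ and then \eqref{cBH} at order $k-1$ to $\nabla w\in W^{k-1,1}(Q)$,
\begin{align*}
  \int_Q|w-w_Q|\;d\mathcal{H}^{n-k}_\infty \le C\int_Q|\nabla w|\;d\mathcal{H}^{n-k+1}_\infty &= C\int_Q\bigl|\nabla w-T^{(k-2)}_Q(\nabla w)\bigr|\;d\mathcal{H}^{n-(k-1)}_\infty \\ &\le C'\int_Q|\nabla^{k-1}(\nabla w)|\;dx = C'\int_Q|\nabla^k u|\;dx .
\end{align*}
Finally $\int_Q|w|\,d\mathcal{H}^{n-k}_\infty\le\int_Q|w-w_Q|\,d\mathcal{H}^{n-k}_\infty+|w_Q|\,\mathcal{H}^{n-k}_\infty(Q)$ with $|w_Q|\le\tfrac{1}{|Q|}\|u-T^{(k-1)}_Q u\|_{L^1(Q)}\lesssim l(Q)^{k-n}\int_Q|\nabla^k u|\,dx$, so the last term is again $\lesssim\int_Q|\nabla^k u|\,dx$; this closes the induction.

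The main obstacle is \eqref{os} for $\beta\in(1,n)$, a fractional-dimensional analogue of \eqref{Poincare1}, and I would attack it along the same lines: by the coarea formula both sides become layer-cake integrals, $\int_Q|\nabla w|\,d\mathcal{H}^\beta_\infty=\int_0^\infty\mathcal{H}^\beta_\infty(\{|\nabla w|>s\})\,ds$ and $\int_Q|w-w_Q|\,d\mathcal{H}^{\beta-1}_\infty=\int_0^\infty\mathcal{H}^{\beta-1}_\infty(\{|w-w_Q|>t\}\cap Q)\,dt$, and the heart reduces to a relative boxing/isoperimetric inequality at the content level --- bounding $\mathcal{H}^{\beta-1}_\infty(E\cap Q)$, for a set $E$ of finite perimeter normalized so that $|E\cap Q|\le\tfrac12|Q|$, by a $\beta$-dimensional perimeter-content of $E$ --- which upon integration in the level parameter and one more use of the coarea formula returns $\int_Q|\nabla w|\,d\mathcal{H}^\beta_\infty$. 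For $\beta=n$ this is Gustin's boxing inequality; the Frostman-type covering machinery for Hausdorff contents should deliver the fractional version, and this, together with checking that the pointwise potential estimates behind \eqref{Poincare1}/\eqref{os} hold $\mathcal{H}^\beta_\infty$-quasi-everywhere (which is precisely why the Choquet integral is the right object on the left of \eqref{BMO2estimate}), is the only genuinely technical ingredient; the rest is bookkeeping with the averaged Taylor projections and with the equivalence of cubes and balls, away from $\partial\Omega$, in the definition of $BMO^\beta(\Omega)$. It is worth stressing that one cannot shortcut \eqref{cBH} via the naive pointwise bound $|u-T^{(k-1)}_Q u|\lesssim I_k\bigl(|\nabla^k u|\chi_Q\bigr)$ followed by an Adams-type Choquet estimate: at this endpoint dimension $\int_Q I_k\bigl(|\nabla^k u|\chi_Q\bigr)\,d\mathcal{H}^{n-k}_\infty$ is strictly larger than the left side of \eqref{cBH} (it picks up a logarithmic loss when $\nabla^k u$ concentrates near an $(n-k)$-dimensional plane), so the cancellation carried by $u$ being a genuine $k$-th order Sobolev function must be exploited, as it is in the argument above.
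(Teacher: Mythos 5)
Your overall architecture is genuinely different from the paper's. The paper introduces no Taylor projections and no new Poincar\'e-type inequality for Choquet integrals: it approximates $u$ by $u_m\in C_c^\infty(\Omega)$, multiplies $u_m-u_{2Q}$ by a cutoff $\phi_Q$ satisfying $\|\nabla^j\phi_Q\|_{L^\infty}\lesssim\ell(Q)^{-j}$, applies the Adams/Meyers--Ziemer estimate of Lemma~\ref{AdMaz} (which is for compactly supported functions) to the product $(u_m-u_{2Q})\phi_Q$, and then absorbs the Leibniz error terms using the classical Poincar\'e inequality; it is precisely those Leibniz terms that produce the full sum $\sum_{j=1}^k$ on the right of \eqref{BMO2estimate}. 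Your plan instead reduces to a content Bramble--Hilbert inequality \eqref{cBH} and aims to prove it by induction. It is worth recording that \eqref{cBH} \emph{is} provable, and in fact by exactly the paper's cutoff argument run on $v_m=u_m-T^{(k-1)}_{2Q}u_m$ rather than on $u_m-u_{2Q}$: the $L^1$ Bramble--Hilbert bound $\|\nabla^j v_m\|_{L^1(2Q)}\lesssim\ell(Q)^{k-j}\|\nabla^k u_m\|_{L^1(2Q)}$ collapses all of the Leibniz terms into the single $j=k$ term, and then the polynomial term contributes only $j=1$, exactly as in the first half of your write-up. So the stronger conclusion you advertise (only $j=1$ and $j=k$) is attainable; it just does not come out of the induction you set up.

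The induction itself has a genuine gap. Everything hinges on the ``one-step content--Poincar\'e inequality'' \eqref{os},
\begin{equation*}
\int_Q|w-w_Q|\,d\mathcal{H}^{\beta-1}_\infty\;\lesssim\;\int_Q|\nabla w|\,d\mathcal{H}^{\beta}_\infty,\qquad \beta\in(1,n],
\end{equation*}
which you explicitly do not prove, and which for $\beta<n$ is not a known result --- the only established case is $\beta=n$, where the right side degenerates to a Lebesgue integral and the statement becomes Theorem~\ref{MorreyBMO}. Your sketch (layer-cake on both sides, then a relative boxing inequality at content level, ``Frostman-type covering machinery should deliver the fractional version'') is a program, not a proof: there is no coarea formula for Choquet integrals against $\mathcal{H}^\beta_\infty$, so the right-hand side $\int_Q|\nabla w|\,d\mathcal{H}^\beta_\infty$ does not decompose into a level-set perimeter quantity the way $\int_Q|\nabla w|\,dx$ does, and the ``$\beta$-dimensional perimeter-content'' that the argument would need is not defined anywhere and is not in the literature you cite. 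There is also a well-posedness issue you gloss over: for $w\in W^{1,1}(Q)$ and $\beta<n$ the quantity $\int_Q|\nabla w|\,d\mathcal{H}^\beta_\infty$ depends on the choice of representative of $\nabla w$, and no quasi-continuity statement for $\nabla w$ with respect to $\mathcal{H}^\beta_\infty$ is provided. Until \eqref{os} is actually established the inductive step does not close, so the proof as written is incomplete. Since the cutoff/Leibniz argument already yields \eqref{cBH} with no new inequality required, the cleanest repair is to discard \eqref{os} and the induction and run that argument instead.
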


We next turn our attention to the critical regime in the nonlinear setting.  In particular, let $F \in L^1(\Omega;\mathbb{R}^n)$ and consider the system
\begin{equation}\label{div_poisson1}
\begin{split}
-\operatorname*{div}( |\nabla U|^{n-2} \nabla U) &= F  \mbox{ in }  \Omega,\\
 U  &=0   \mbox{ on }\partial \Omega.
\end{split}
\end{equation}
The notion of solution for this type of system needs to be defined with care since $\int_{\Omega} F\cdot \Phi \, dx$ is not well defined for $\Phi\in W^{1,n}(\Omega; \mathbb{R}^n)$. A suitable definition of solution can be found in  the work of Dolzmann, Hungerb\"{u}hler, and M\"{u}ller \cite[Definition 1 on p.~546]{DHM1}, whose results \cite{DHM, DHM1} include existence and uniqueness of solutions on bounded Lipschitz domains  satisfying  $U \in BMO(\Omega)$ and $\nabla U \in  L^{n, \infty} (\Omega; \mathbb{R}^{n \times n})$.  Their results hold even in the more general setting $F \in M_b(\Omega;\mathbb{R}^n)$, the space of finite vector-valued Radon measures on $\Omega$, though we restrict our consideration to $L^1(\Omega;\mathbb{R}^n)$ in the sequel.

A combination of the results of \cite{DHM, DHM1} and our Theorem \ref{BMObetabyPoincare} yields
\begin{corollary}\label{BMObetaforpLapacian}
Let $\Omega$ be a bounded Lipschitz domain. Let $F \in L^1(\Omega;\mathbb{R}^n)$ and suppose $U$ satisfies \eqref{div_poisson1}.  Then $U \in BMO^\beta(\Omega)$ for any $\beta \in (0,n]$ and we have the estimate
\begin{align}
\| U \|_{BMO^\beta(\Omega)} \mathbb \leq \, C \, \|F\|_{L^1(\Omega )},
\end{align}
where the constant $C=C(\beta, n)$ depends only on $\beta$ and $n$. 
\end{corollary}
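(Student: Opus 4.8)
The plan is to obtain Corollary~\ref{BMObetaforpLapacian} by composing two ingredients: the existence and maximal-regularity theory of Dolzmann, Hungerb\"uhler and M\"uller for the $n$-Laplace system with measure data, which transfers $F\in L^1$ to a weak-$L^n$ bound on $\nabla U$, and Theorem~\ref{BMObetabyPoincare}, which transfers a weak-$L^n$ bound on $\nabla U$ to a $BMO^\beta$ bound on $U$. No new analytic estimate is required; everything is already in place and the work is the bookkeeping at the interface.

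\emph{Step 1 (regularity input).} Since $\Omega$ is bounded Lipschitz and $F\in L^1(\Omega;\mathbb{R}^n)\subset M_b(\Omega;\mathbb{R}^n)$, the results of \cite{DHM, DHM1} furnish the solution $U$ of \eqref{div_poisson1} in their sense, with zero boundary trace and unique in the class $\{U\in BMO(\Omega),\ \nabla U\in L^{n,\infty}(\Omega;\mathbb{R}^{n\times n})\}$, together with the a priori bound
\begin{align*}
\|\nabla U\|_{L^{n,\infty}(\Omega;\mathbb{R}^{n\times n})}\le C(n)\,\|F\|_{L^1(\Omega)}^{1/(n-1)},
\end{align*}
with $C$ independent of $F$; this is implicit in their construction (test the regularized problems $-\operatorname*{div}(|\nabla U_\varepsilon|^{n-2}\nabla U_\varepsilon)=F_\varepsilon$ against truncations of $U_\varepsilon$ and pass to the limit), and the power $1/(n-1)$ is the one forced by the homogeneity $U\mapsto\lambda U$, $F\mapsto\lambda^{n-1}F$ of \eqref{div_poisson1}. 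On the bounded set $\Omega$ this gives $\nabla U\in L^{n,\infty}(\Omega)\subset L^1(\Omega)$ and $U\in BMO(\Omega)\subset L^1_{loc}(\Omega)$, so $U\in W^{1,1}_{loc}(\Omega)$; this is precisely the hypothesis needed in the next step.

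\emph{Step 2 (conclusion).} Fix $\beta\in(0,n]$. Applying Theorem~\ref{BMObetabyPoincare} on the open set $\Omega$ to each scalar component $U^{(i)}$ of $U$ and summing over $i=1,\dots,n$ yields
\begin{align*}
\|U\|_{BMO^\beta(\Omega)}\le C(\beta,n)\,\|\nabla U\|_{L^{n,\infty}(\Omega;\mathbb{R}^{n\times n})}\le C(\beta,n)\,\|F\|_{L^1(\Omega)}^{1/(n-1)},
\end{align*}
the second inequality being Step 1; since the $BMO^\beta$ quasinorm is local (the supremum ranges only over cubes $Q$ with $2Q\subset\Omega$), the constant depends only on $\beta$ and $n$ and not further on $\Omega$. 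This is the asserted estimate, and for $0<\beta<n$ it strictly improves the classical $U\in BMO(\Omega)$ of \cite{DHM,DHM1} via the inclusion $BMO^\beta(\Omega)\subset BMO(\Omega)$, while the endpoint $\beta=n$ reduces to that classical statement since $BMO^n(\Omega)\equiv BMO(\Omega)$.

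\emph{Main obstacle.} There is no substantive analytic difficulty: Theorem~\ref{BMObetabyPoincare} is already proved and the remainder is cited regularity theory. The one point that requires care is checking that the object produced by \cite{DHM, DHM1} is exactly the solution intended in \eqref{div_poisson1} (same notion of solution, same boundary condition, same uniqueness class), that their regularity conclusion is genuinely quantitative with a constant independent of $F$, that it is available for a merely Lipschitz domain $\Omega$, and that the resulting $U$ lies in $W^{1,1}_{loc}(\Omega)$ so that Theorem~\ref{BMObetabyPoincare} may legitimately be applied componentwise.
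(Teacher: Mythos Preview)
Your proposal is correct and follows exactly the route indicated by the paper, which simply states that the corollary is ``a combination of the results of \cite{DHM, DHM1} and our Theorem~\ref{BMObetabyPoincare}''; you have spelled out this combination in detail. Note that your derived bound $\|U\|_{BMO^\beta(\Omega)}\le C\|F\|_{L^1(\Omega)}^{1/(n-1)}$ carries the exponent $1/(n-1)$ forced by the homogeneity of \eqref{div_poisson1}, whereas the corollary as stated omits this exponent---your version is the dimensionally consistent one.
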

\noindent
For existence and uniqueness for \eqref{div_poisson1} in a slightly different class of Sobolev spaces, see the the work of Greco, Iwaniec, and Sbordone \cite{GIS}.

The consideration of this system in light of the work of Bourgain and Brezis on regularity for elliptic systems with $L^1$ data \cites{BourgainBrezis2004,BourgainBrezis2007} suggests it should be interesting to study a constrained system.  In particular, in the vector case in the linear setting ($n=2$), under the assumption that $\operatorname*{div} F=0$, Bourgain and Brezis had shown that \eqref{div_poisson1} admits a solution for which $\nabla U \in L^2$.  We here observe that this result extends to the non-linear setting in

 \begin{theorem}\label{estimatefornonlinearpLaplacian} Let $\Omega$ be a  smooth bounded domain.  Let $F \in L^1(\Omega;\mathbb{R}^n)$ with $\operatorname*{div}F=0$.  Then there exists a unique weak solution $U \in W^{1,n}_0(\Omega;\mathbb{R}^{n})$ of the system
\begin{equation}\label{div_n_laplace}\left\{
\begin{split}
-\operatorname*{div}( |\nabla U|^{n-2} \nabla U) &= F  \mbox{ in }  \Omega,\\
 U  &=0   \mbox{ on }\partial \Omega.
\end{split}\right.
\end{equation}
Moreover, one has the estimate
\begin{align}
\|\nabla U \|_{L^n(\Omega ; \mathbb{R}^{n \times n})} \mathbb \leq C \|F\|^{1/(n-1)}_{L^1(\Omega ; \mathbb{R}^{n})},
\end{align}
where the constant $C$ is independent of $F$. 
\end{theorem}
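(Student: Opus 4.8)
The plan is to obtain $U$ as the minimizer of the convex energy
\[
J(V) := \frac{1}{n}\int_{\Omega} |\nabla V|^n\,dx - \int_{\Omega} F\cdot V\,dx, \qquad V \in W^{1,n}_0(\Omega;\mathbb{R}^n),
\]
whose Euler--Lagrange equation is precisely \eqref{div_n_laplace}. The one nontrivial point is that the linear term is well defined and bounded on $W^{1,n}_0(\Omega;\mathbb{R}^n)$: although $W^{1,n}_0(\Omega)$ does not embed into $L^\infty$, so that $V \mapsto \int_\Omega F\cdot V$ is \emph{not} continuous on $W^{1,n}_0$ for a generic $F\in L^1$, the hypothesis $\operatorname*{div}F=0$ restores continuity. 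Indeed, by the estimates of Bourgain and Brezis \cites{BourgainBrezis2004,BourgainBrezis2007} --- in the form valid on a smooth bounded domain, which for $n=2$ is exactly the linear statement quoted above --- there is $C=C(\Omega,n)$ with
\[
\left| \int_{\Omega} F\cdot V\,dx \right| \le C\,\|F\|_{L^1(\Omega)}\,\|\nabla V\|_{L^n(\Omega;\mathbb{R}^{n\times n})} \qquad \text{for all } V \in W^{1,n}_0(\Omega;\mathbb{R}^n).
\]
Granting this, $L(V):=\int_\Omega F\cdot V\,dx$ extends to a bounded linear functional on $W^{1,n}_0(\Omega;\mathbb{R}^n)$ and $J$ is a well-defined, strongly continuous, strictly convex functional on this reflexive space.

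Existence and uniqueness then follow from the direct method. Using the Poincar\'e inequality on the bounded domain $\Omega$, the $W^{1,n}_0$-norm is equivalent to $\|\nabla\,\cdot\,\|_{L^n}$, and the displayed bound gives $J(V)\ge \tfrac1n\|\nabla V\|_{L^n}^n - C\|F\|_{L^1}\|\nabla V\|_{L^n}$, which tends to $+\infty$ as $\|\nabla V\|_{L^n}\to\infty$ since $n>1$; thus $J$ is coercive. Being convex and continuous, $J$ is weakly lower semicontinuous, hence attains a minimum at some $U\in W^{1,n}_0(\Omega;\mathbb{R}^n)$. Strict convexity of $\xi\mapsto|\xi|^n$ on $\mathbb{R}^{n\times n}$, together with the fact that a $W^{1,n}_0$ function with a.e.\ vanishing gradient is zero, makes $J$ strictly convex and so the minimizer unique. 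Differentiating $t\mapsto J(U+t\Phi)$ at $t=0$ for $\Phi\in W^{1,n}_0(\Omega;\mathbb{R}^n)$ yields $\int_\Omega |\nabla U|^{n-2}\nabla U:\nabla\Phi\,dx = L(\Phi)$, i.e.\ $U$ is a weak solution of \eqref{div_n_laplace}; conversely any weak solution is a critical point of the convex functional $J$ and hence equals this minimizer, giving uniqueness in $W^{1,n}_0$. Approximating $\Phi$ by fields in $C^\infty_c(\Omega)$ shows that $U$ also satisfies the Dolzmann--Hungerb\"uhler--M\"uller notion of solution, so it is consistent with the solution produced in \cites{DHM,DHM1}.

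The quantitative bound is then immediate by testing the equation against $\Phi=U$ --- which is legitimate precisely because $U\in W^{1,n}_0(\Omega;\mathbb{R}^n)$ and $L$ is defined on all of $W^{1,n}_0$: this gives
\[
\int_{\Omega}|\nabla U|^n\,dx = L(U) \le C\,\|F\|_{L^1(\Omega)}\,\|\nabla U\|_{L^n(\Omega;\mathbb{R}^{n\times n})},
\]
so that (the case $\nabla U\equiv 0$ being trivial) $\|\nabla U\|_{L^n}^{\,n-1} \le C\,\|F\|_{L^1}$, that is, $\|\nabla U\|_{L^n(\Omega;\mathbb{R}^{n\times n})} \le C^{1/(n-1)}\,\|F\|_{L^1(\Omega;\mathbb{R}^n)}^{1/(n-1)}$, with a constant independent of $F$.

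The main obstacle will be purely the Bourgain--Brezis duality estimate on the bounded domain $\Omega$; once it is in hand the rest is routine convex analysis. In deducing it from the known statements on $\mathbb{R}^n$ (or $\mathbb{T}^n$), the delicate point is the passage to the domain: a field $V\in W^{1,n}_0(\Omega)$ extends by zero to $W^{1,n}(\mathbb{R}^n)$ at no cost, but an extension of $F$ preserving $\operatorname*{div}F=0$ must account for the normal trace of $F$ on $\partial\Omega$, whose total flux vanishes since $\int_\Omega\operatorname*{div}F = 0$; smoothness of $\Omega$ makes this possible. In fact the $n=2$ instance of exactly this estimate is what Bourgain and Brezis established directly on smooth bounded domains, so for general $n$ one invokes their domain version and the argument above goes through verbatim.
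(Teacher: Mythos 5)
Your proposal is correct and works, but it takes a genuinely different route from the paper. The paper's proof approximates $F$ by smooth divergence-free fields $F_m$, invokes the Dolzmann--Hungerb\"uhler--M\"uller existence theory \cites{DHM,DHM1} to produce solutions $U_m\in W^{1,n}_0(\Omega;\mathbb{R}^n)$ of the approximate problems, tests each equation with $\Phi=U_m$ to get the uniform bound $\|\nabla U_m\|_{L^n}^{n-1}\le C\|F_m\|_{L^1}$ via \cite{BVS}*{Lemma 3.4}, and then passes to the limit by weak compactness and lower semicontinuity. Your proof instead runs the direct method on the convex energy $J$, using the same duality estimate to make the linear term bounded on $W^{1,n}_0$, and derives the bound by testing the Euler--Lagrange equation with $U$. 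What your approach buys is a more self-contained argument: existence and uniqueness come from strict convexity of $J$ rather than being imported from \cites{DHM,DHM1}, and one avoids the (slightly glossed-over in the paper) issue of showing that the weak limit of $U_m$ still solves the nonlinear equation. What the paper's approach buys is automatic consistency with the DHM notion of solution for the approximating sequence. Both hinge on the same key input, the Brezis--Van Schaftingen duality estimate on a bounded domain; your citation to \cites{BourgainBrezis2004,BourgainBrezis2007} is in the right spirit but the precise reference for general $n$ on a smooth bounded domain is \cite{BVS}*{Lemma 3.4}, which is what the paper uses, and which would make the closing ``purely the Bourgain--Brezis estimate on the bounded domain'' discussion unnecessary since that result is already available in the literature in exactly the form you need.
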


We first obtained Theorem \ref{estimatefornonlinearpLaplacian} by a combination of arguments involving $BMO^1(\Omega)$ and the atomic decomposition established in \cite{HS} (see also \cites{GHS,HRS,RSS}).  After further study, we realized that in fact it is an easy consequence of the work of Brezis and Van Schaftingen \cite{BVS}, and in fact their result can be applied generally to a large class of non-linear equations:

\begin{theorem}\label{estimatefornonlinearpLaplacian_A}
Let $\Omega$ be a smooth bounded domain.
 Suppose $\mathcal{A}: \Omega \times \mathbb{R}^{n\times n}\mapsto\mathbb{R}^n$ is a Carath\'eodory function; that is,  for $x\in\Omega$, the mapping $\xi\mapsto\mathcal{A}(x,\xi)$ is measurable, for a.e. $x$, the mapping
$x\mapsto\mathcal{A}(x,\xi)$ is continuous, there exists a constant $c_1>0$ such that
\begin{align*}
c_1\,|\xi|^n \leq \mathcal{A}(x,\xi)\cdot \xi,
\end{align*}
and there exists a constant $c_2>0$ such that
\begin{align*}
| \mathcal{A}(x,\xi)| \le c_2\, | \xi|^{n-1}.
\end{align*}
Let $F \in L^1(\Omega;\mathbb{R}^n)$ with $\operatorname*{div}F=0$.  Then there exists a unique weak solution $U \in W^{1,n}_0(\Omega;\mathbb{R}^{n})$ of the system
\begin{equation}\label{div_n_laplace_A0}
\begin{split}
-\operatorname*{div} \mathcal{A}(x,\nabla U)&= F  \mbox{ in }  \Omega,\\
 U  &=0   \mbox{ on }\partial \Omega.
\end{split}
\end{equation}
Moreover, one has the estimate
\begin{align}
\|\nabla U \|_{L^n(\Omega;\mathbb{R}^{n\times n})} \leq C \|F\|^{1/(n-1)}_{L^1(\Omega;\mathbb{R}^n)},
\end{align}
where the constant $C$ is independent of $F$. 
\end{theorem}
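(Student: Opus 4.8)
The plan is to solve the minimization problem associated with the equation and then use the constraint $\operatorname{div} F = 0$ together with the Brezis--Van Schaftingen estimate to control the energy. First I would set up the direct method: because $\mathcal{A}$ satisfies $c_1|\xi|^n \leq \mathcal{A}(x,\xi)\cdot\xi$ and $|\mathcal{A}(x,\xi)| \leq c_2|\xi|^{n-1}$, the operator $U \mapsto -\operatorname{div}\mathcal{A}(x,\nabla U)$ is a bounded, coercive, strictly monotone operator from $W^{1,n}_0(\Omega;\mathbb{R}^n)$ to its dual (after noting the growth conditions give $\mathcal{A}(x,\nabla U) \in L^{n/(n-1)}$). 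The subtlety, as the authors emphasize, is that $F \in L^1$ does not define a bounded linear functional on $W^{1,n}_0(\Omega;\mathbb{R}^n)$, so one cannot directly invoke Browder--Minty. This is where $\operatorname{div} F = 0$ enters: by the Bourgain--Brezis / Brezis--Van Schaftingen theory, a divergence-free $L^1$ vector field \emph{does} pair continuously with $W^{1,n}_0(\Omega;\mathbb{R}^n)$. Concretely, \cite{BVS} gives the estimate
\begin{align*}
\left| \int_\Omega F \cdot \Phi \, dx \right| \leq C \, \|F\|_{L^1(\Omega;\mathbb{R}^n)} \, \|\nabla \Phi\|_{L^n(\Omega;\mathbb{R}^{n\times n})}
\end{align*}
for all $\Phi \in C^\infty_c(\Omega;\mathbb{R}^n)$ when $\operatorname{div} F = 0$ (in the sense of distributions). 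Hence $F$ extends to a bounded linear functional on $W^{1,n}_0(\Omega;\mathbb{R}^n)$ with dual norm $\lesssim \|F\|_{L^1}$.

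Once $F$ is realized as an element of $W^{-1,n'}(\Omega;\mathbb{R}^n) = (W^{1,n}_0)^*$, existence and uniqueness of a weak solution $U \in W^{1,n}_0(\Omega;\mathbb{R}^n)$ follow from the standard theory of monotone operators (Browder--Minty): the map $\mathcal{T}(U) := -\operatorname{div}\mathcal{A}(x,\nabla U)$ is bounded, hemicontinuous, coercive by the lower bound $\langle \mathcal{T}(U), U\rangle \geq c_1 \|\nabla U\|_{L^n}^n$, and strictly monotone provided $\mathcal{A}(x,\cdot)$ is strictly monotone — or, if one only assumes the two stated structure conditions, one still gets existence, and uniqueness must be argued separately or the hypotheses tacitly include monotonicity of $\xi \mapsto \mathcal{A}(x,\xi)$; I would either add that hypothesis explicitly or note that \eqref{div_n_laplace_A0} with $\mathcal{A}(x,\xi) = |\xi|^{n-2}\xi$ (the case of Theorem \ref{estimatefornonlinearpLaplacian}) is genuinely strictly monotone. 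For the quantitative bound, test the weak formulation with $\Phi = U$:
\begin{align*}
c_1 \|\nabla U\|_{L^n(\Omega;\mathbb{R}^{n\times n})}^n \leq \int_\Omega \mathcal{A}(x,\nabla U)\cdot \nabla U \, dx = \int_\Omega F \cdot U \, dx \leq C \|F\|_{L^1(\Omega;\mathbb{R}^n)} \|\nabla U\|_{L^n(\Omega;\mathbb{R}^{n\times n})},
\end{align*}
where the last inequality is the Brezis--Van Schaftingen pairing estimate applied with $\Phi = U$ (valid by density since $U \in W^{1,n}_0$). Dividing by $\|\nabla U\|_{L^n}$ and rearranging gives $\|\nabla U\|_{L^n}^{n-1} \leq (C/c_1)\|F\|_{L^1}$, i.e.\ $\|\nabla U\|_{L^n} \leq C' \|F\|_{L^1}^{1/(n-1)}$, which is exactly the claimed estimate with a constant depending only on $n$, $c_1$, and the Brezis--Van Schaftingen constant for $\Omega$.

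The main obstacle — or rather, the one genuinely nontrivial input — is the Brezis--Van Schaftingen pairing: justifying that $\int_\Omega F\cdot \Phi\,dx$ makes sense and is controlled by $\|F\|_{L^1}\|\nabla\Phi\|_{L^n}$ for divergence-free $F \in L^1$ and $\Phi \in W^{1,n}_0$. On a smooth bounded domain this follows from \cite{BVS} (the borderline Sobolev estimate / its duality formulation), possibly after extending $F$ by zero outside $\Omega$ and using that $\operatorname{div} F = 0$ is preserved — here the smoothness of $\partial\Omega$ is used to control boundary terms, or alternatively one works directly with the interior formulation and a density argument. A secondary technical point is that the weak formulation of \eqref{div_n_laplace_A0} must be posed against test functions in $W^{1,n}_0(\Omega;\mathbb{R}^n)$ rather than $C^\infty_c$, so one needs the pairing estimate to pass to the closure, which is immediate from its form. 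Everything else — coercivity, boundedness of $\mathcal{A}(x,\nabla U)$ in $L^{n'}$, the monotone operator existence theorem, and the energy estimate — is routine given these ingredients.
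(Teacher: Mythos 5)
Your proof is correct in its key quantitative content and identifies the right ingredients, but it takes a genuinely different route to existence than the paper does. The paper's proof approximates $F$ by smooth divergence-free $F_m$ with $\|F_m\|_{L^1}\to\|F\|_{L^1}$, invokes the Dolzmann--Hungerb\"uhler--M\"uller existence theory (\cite{DHM,DHM1}) for each $F_m$ to obtain $U_m\in BMO\cap W^{1,n}_0$, derives the uniform bound $\|\nabla U_m\|_{L^n}\leq C\|F_m\|_{L^1}^{1/(n-1)}$ exactly as you do (testing with $U_m$ and applying Lemma 3.4 of \cite{BVS}), and then passes to a weak limit, using lower semicontinuity for the final estimate. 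You instead realize $F$ directly as an element of $W^{-1,n'}(\Omega;\mathbb{R}^n)$ via the same \cite{BVS} pairing estimate, and then apply Browder--Minty to solve the equation in one shot, without any approximation of the data. Your approach is cleaner in that it avoids both the $F_m$-approximation and the DHM machinery, and makes transparent \emph{why} $\operatorname{div}F=0$ resolves the obstruction flagged in the introduction (namely that $\int F\cdot\Phi$ is a priori not defined on $W^{1,n}$); the paper's approach has the merit of producing solutions inside the exact solution class of \cite{DHM,DHM1}, which it uses elsewhere. The derivation of the energy estimate is identical in both.

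One remark of substance: you are right to flag that the stated hypotheses on $\mathcal{A}$ (coercivity $c_1|\xi|^n\leq\mathcal{A}(x,\xi)\cdot\xi$ and growth $|\mathcal{A}(x,\xi)|\leq c_2|\xi|^{n-1}$) do not by themselves include monotonicity, which is needed for Browder--Minty (for existence \emph{and} uniqueness), and which is also part of the structure conditions in \cite{DHM,DHM1} that the paper appeals to. Your aside that ``one still gets existence'' from the two stated structure conditions alone is too optimistic --- without some form of (pseudo)monotonicity or convexity, neither the direct method nor the monotone-operator framework applies. The paper implicitly imports monotonicity by citing DHM; in a self-contained write-up you should state monotonicity of $\xi\mapsto\mathcal{A}(x,\xi)$ as a hypothesis. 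With that addition your argument is complete and slightly more economical than the paper's.
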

\noindent
Note that the hypothesis on $\mathcal{A}(x,\xi)$ imply that it satisfies the hypothesis of Theorem 1.1 (Existence) and
Theorem 1.2 (Uniqueness) from \cite{DHM,DHM1}. 

Theorems \ref{estimatefornonlinearpLaplacian} and \ref{estimatefornonlinearpLaplacian_A} prompt one to wonder whether their conclusions can be strengthened on the Lorentz scale.  In particular, we recall Bourgain and Brezis' observation that for $n=2$, one has $\nabla U \in L^{2,1}$ (the Lorentz space) and therefore $U \in L^\infty$.  Indeed, we have the following analogous result for the local solutions of \eqref{div_poisson1}:
\begin{theorem}\label{estimatefornonlinearpLaplacian_Lorentz}
Let $\Omega$ be a  smooth bounded domain.  Suppose $F \in L^1(\Omega;\mathbb{R}^n)$ is such that $\operatorname*{supp} F \subset \Omega$ and $\operatorname*{div}F=0$ and suppose $U$ is the solution  of
\begin{equation}\label{div_n_laplace2}\left\{
\begin{split}
-\operatorname*{div}( |\nabla U|^{n-2} \nabla U) &= F  \mbox{ in }  \Omega,\\
 U  &=0   \mbox{ on }\partial \Omega.
\end{split}\right.
\end{equation}
Then $\nabla U \in L^{n,n-1}_{loc} (\Omega ; \mathbb{R}^{n \times n})$.  In particular, for every ball $B_1 \subset \Omega$, denoting by $B_\theta$ the ball with same center and radius $\theta \in (0,1)$, we have
\begin{align}
\|\nabla U \|_{L^{n,n-1}(B_\theta;\mathbb{R}^{n\times n}))} \leq C \|F\|^{1/(n-1)}_{L^1(\Omega;\mathbb{R}^n)},
\end{align}
where the constant $C$ is independent of $F$. 
\end{theorem}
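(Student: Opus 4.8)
The plan is to convert the Lorentz estimate for $\nabla U$ into a first--order divergence estimate and then to exploit the constraint $\operatorname{div}F=0$ through the Bourgain--Brezis/Brezis--Van Schaftingen circle of ideas. By Theorem~\ref{estimatefornonlinearpLaplacian} the weak solution $U\in W^{1,n}_{0}(\Omega;\mathbb{R}^{n})$ exists, is unique, and satisfies $\|\nabla U\|_{L^{n}}\le C\|F\|_{L^{1}}^{1/(n-1)}$; consequently the field $W:=|\nabla U|^{n-2}\nabla U$ lies in $L^{n/(n-1)}(\Omega;\mathbb{R}^{n\times n})$ and solves $\operatorname{div}W=-F$ in $\mathcal{D}'(\Omega)$. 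The point is now the pointwise identity $|\nabla U|=|W|^{1/(n-1)}$ together with the homogeneity of the Lorentz quasinorm, $\||g|^{s}\|_{L^{p,q}}=\|g\|_{L^{sp,sq}}^{s}$: with $s=1/(n-1)$, $p=n$, $q=n-1$ this gives
\[
\|\nabla U\|_{L^{n,n-1}(B_\theta)}=\|W\|_{L^{n/(n-1),1}(B_\theta)}^{1/(n-1)}.
\]
Thus the theorem is equivalent to the local estimate $\|W\|_{L^{n/(n-1),1}(B_\theta)}\le C\|F\|_{L^{1}}$, that is, to an $L^{n',1}_{loc}$ bound ($n'=n/(n-1)$) for a solution of the linear equation $\operatorname{div}W=-F$ whose datum is divergence free, $L^{1}$, and compactly supported in $\Omega$. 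For $n=2$ one has $W=\nabla U$ and this is precisely the Bourgain--Brezis statement recalled in the introduction.

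To produce a good reference solution I would extend $F$ by zero to $\mathbb{R}^{n}$ (this preserves divergence--freeness, the $L^{1}$ bound and compact support) and let $Z$ solve $-\Delta Z=F$ in $\mathbb{R}^{n}$. By the Lorentz form of the Bourgain--Brezis estimate for Hodge systems with divergence--free $L^{1}$ data \cite{BVS} (which for $n=2$ reduces to the stream--function argument together with the sharp embedding $\dot W^{1,1}(\mathbb{R}^{n})\hookrightarrow L^{n/(n-1),1}(\mathbb{R}^{n})$ and the boundedness of Calder\'on--Zygmund operators on $L^{n/(n-1),1}$), one obtains $W_{0}:=\nabla Z\in L^{n/(n-1),1}(\mathbb{R}^{n};\mathbb{R}^{n\times n})$ with $\|W_{0}\|_{L^{n/(n-1),1}}\le C\|F\|_{L^{1}}$ and $\operatorname{div}W_{0}=-F$. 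It then remains to control the correction $R:=W-W_{0}$, which is divergence free in $\Omega$ and a priori only in $L^{n/(n-1)}_{loc}(\Omega)$; one must upgrade it to $L^{n/(n-1),1}_{loc}(\Omega)$.

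For $R$ I would split a fixed ball $B_{1}\subset\Omega$ into the region away from $\operatorname{supp}F$ and a neighbourhood of $\operatorname{supp}F$. Away from $\operatorname{supp}F$ the map $U$ solves the homogeneous system $\operatorname{div}(|\nabla U|^{n-2}\nabla U)=0$, so by the Uhlenbeck $C^{1,\alpha}$ regularity theory for systems of this structure $\nabla U$, hence $W$ and $R$, are locally H\"older continuous and the bound is immediate there. Near $\operatorname{supp}F$ one must use that $R$ is not merely divergence free: since $\nabla U=|W|^{(2-n)/(n-1)}W$ is a gradient, $\operatorname{curl}\big(|W|^{(2-n)/(n-1)}W\big)=0$, which, combined with $\operatorname{curl}W_{0}=0$, identifies $\operatorname{curl}R$ in terms of the nonlinearity; a Hodge decomposition on a ball then reduces the matter to a Calder\'on--Zygmund estimate on the Lorentz scale, the inputs being the $L^{n/(n-1),1}$ control of $W_{0}$ and the $L^{n/(n-1)}$ control of $W$. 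The main obstacle is exactly this last step: divergence--freeness alone yields no integrability improvement, so one must genuinely feed the constraint that $W$ is a nonlinear function of a gradient into the Bourgain--Brezis/Brezis--Van Schaftingen machinery. For $n=2$ the exponent $\tfrac{2-n}{n-1}$ vanishes, $\operatorname{curl}W=0$, and $R$ is harmonic, so the argument collapses to the classical case; for $n\ge3$ I expect a perturbative/linearization argument around $W_{0}$, exploiting that $R$ has strictly better than borderline integrability, to be what is needed to close the estimate. Summing over a finite cover of $B_{\theta}$ then gives the stated bound.
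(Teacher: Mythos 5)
There is a genuine gap, and you have in fact identified it yourself. Your setup is correct and parallels the paper's: writing $W:=|\nabla U|^{n-2}\nabla U$, using the Lorentz rescaling $\|\nabla U\|_{L^{n,n-1}}=\|W\|_{L^{n/(n-1),1}}^{1/(n-1)}$, extending $F$ by zero, and constructing a reference field $W_0=\nabla(-\Delta)^{-1}(-F)$ with $\operatorname{div}W_0=-F$ and $\|W_0\|_{L^{n/(n-1),1}}\lesssim\|F\|_{L^1}$ via the divergence-free endpoint Sobolev estimate of Bourgain--Brezis / Hernandez--Spector. This is exactly the $G_i=-\nabla(-\Delta)^{-1}F_i$ that the paper builds, and the linear bound is the same in both arguments. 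But from that point your plan is to control the correction $R=W-W_0$ by a Hodge decomposition and a linearization around $W_0$, and you candidly write that the main obstacle is precisely this step, that divergence-freeness alone gives no integrability gain and that a perturbative argument is ``what is needed'' rather than what you have. That is the crux of the theorem, not a technical detail: $R$ is a difference between a power of a gradient and a gradient, the $n$-Laplacian is degenerate, and there is no elementary Calder\'on--Zygmund or perturbation-off-$W_0$ argument that closes the Lorentz-scale estimate for $n\geq 3$.

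The paper does not attempt to bound $R$ at all. Having obtained $G\in L^{n/(n-1),1}(B_1)$ with $\operatorname{div}G=F$, it invokes the recent result of Martino and Schikorra, a genuinely nonlinear Calder\'on--Zygmund estimate on the Lorentz scale of the form
\begin{align*}
\|\nabla U\|_{L^{n,n-1}(B_\theta)}^{n-1}\;\lesssim\;\|G\|_{L^{n/(n-1),1}(B_1)}+\|\nabla U\|_{L^{n-\epsilon}(B_1)}^{n-1}
\end{align*}
for solutions of $-\operatorname{div}(|\nabla U|^{n-2}\nabla U)=\operatorname{div}G$, and absorbs the last term using the $L^n$ bound from Theorem~\ref{estimatefornonlinearpLaplacian}. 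This black box is exactly the nonlinear regularity input that your sketch is missing. In short: you have correctly reduced the problem and supplied the linear (divergence-free) half of the argument, but the nonlinear half requires a substantive theorem you have not proved or cited, and the reduction to a perturbative linear estimate for $R$ is not a viable route.
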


Here we rely on a recent result of Martino and Schikorra \cite{MS2024}
which shows that for $G \in L^{n/(n-1),1}(B_1;\mathbb{R}^n)$, any $U$ which satisfies
\begin{equation}\label{MS}
-\operatorname*{div}( |\nabla U|^{n-2} \nabla U) = \operatorname*{div} G  \text{ in }  B_1
\end{equation}
is such that $\nabla U \in L^{n,n-1}(B_\theta;\mathbb{R}^n)$ for some $\theta>0$.  Moreover, this result is optimal in that the second exponent cannot be improved.

We conclude the introduce with a few remarks about the role of $p$, $n$, the divergence free condition, and regularity of the boundary of the domain.  In Theorem \ref{poisson_thm}, Theorem \ref{BMObetabyPoincare}, and Corollary \ref{BMObetaforpLapacian}, the fact that the integrability exponent $p$ is equal to the dimension of the ambient space $n$ is precisely because this is the critical regime of the Sobolev embedding.  This criticality is also present in Theorems \ref{MorreyBMO} and \ref{MorreyBMO2}, though its relation with the preceding criticality is not obvious from the definition of the spaces.  This is not the case for Theorems \ref{estimatefornonlinearpLaplacian}, \ref{estimatefornonlinearpLaplacian_A}, and \ref{estimatefornonlinearpLaplacian_Lorentz}, and it would be interesting to know if these improvements from weak-type to strong-type estimates when the right-hand-side is divergence free, or more generally following the discussion below satisfies a cocancelling differential constraint/is in a dimension stable space of measures, extend to the $p$-Laplace equation for $p\not \in  \{2,n\}$ or more general equations.  

The choice to present Theorems \ref{estimatefornonlinearpLaplacian}, \ref{estimatefornonlinearpLaplacian_A}, and \ref{estimatefornonlinearpLaplacian_Lorentz} for vector-valued functions $U,F: \mathbb{R}^n \to \mathbb{R}^n$ stems from the reliance on Brezis and Van Schaftingen's estimates in \cite{BVS} and the $L^1$ Sobolev embeddings of the third author in \cite{HS,HRS}.  The point here is that such estimates do not hold for unconstrained fields $F$.  However, if one considers the differential equations on $\mathbb{R}^n$ without boundary conditions, the conclusions are valid for any $F:\mathbb{R}^n\to \mathbb{R}^k$ for which $L F=0$ for some homogeneous linear differential operator $L(D):C^\infty_c(\mathbb{R}^d;\mathbb{R}^k) \to C^\infty_c(\mathbb{R}^d;\mathbb{R}^l)$ which is
cocancelling \cite[Definition 1.3 on p.~881]{VS3}, i.e. 
\begin{align*}
\bigcap_{\xi\in\mathbb{R}^d \setminus \{0\}}\ker L(\xi)=\{0\}.
\end{align*}
Here one utilizes the embeddings proved by Van Schaftingen \cite[Theorem 1.4 on p.~881]{VS3} for the Lebesgue scale estimate and D. Stolyarov \cite[Theorem 1 on p.~844]{Stolyarov} for the Lorentz scale estimate in place of \cite{BVS} and \cite{HS,HRS}.  If one wishes to dispense with the vector setting entirely, in place of differential constraints one can assume $F$ is in a dimension stable space of measures introduced in \cite{DS}.  It is possible these results continue to hold with boundary conditions, though this requires some care about their imposition and the regularity of the boundary of the domain.  

Our assumptions on the regularity of the boundary in this paper are such that one can use as blackboxes the results of \cite{Byun} for $L^p$ theory for the Poisson equation, \cite{DHM,DHM1} for existence/uniqueness for the $n$-Laplace, and \cite{BVS} for a $W^{1,n}$ duality estimate.  The weakest of these assumptions are those in \cite{DHM,DHM1}, which are valid for open bounded sets with a density condition on their complement. It is believable that the estimates for $U$ in $BMO^\beta(\Omega)$ are valid under this condition, though this requires a more delicate argument, as even a Lipschitz boundary is not sufficient for $L^p$ theory for the Poisson equation when $n\geq 4$, see e.g. \cite[Theorem 1.2(a) on p.~167]{JK}.

\section{Preliminaries}\label{preliminaries}
In this section, we establish the notational framework and foundational definitions pertinent to the Choquet integral with respect to Hausdorff content $ \mathcal{H}^\beta_\infty$. This groundwork is essential for discussing the concept of $BMO^\beta$ introduced in \cite{Chen-Spector}. \par

For $0< \beta \leq n \in \mathbb{N}$, the Hausdorff content $\mathcal{H}^{\beta}_{\infty}$ of a subset $E \subseteq \mathbb{R}^n$ is defined as
\begin{align*}
\mathcal{H}^{\beta}_{\infty}(E)\vcentcolon= \inf \left\{\sum_{i=1}^\infty \omega_\beta r_i^\beta : E \subset \bigcup_{i=1}^\infty B(x_i,r_i) \right\},
\end{align*}
where $\omega_\beta\vcentcolon= \pi^{\beta/2}/\Gamma(\beta/2+1)$ is a normalization constant.

Let $\Omega$ be an open set in $\mathbb{R}^n$. We recall that the Choquet integral of $f : \Omega \subset \mathbb{R}^n \to [0,\infty]$ is defined by
\begin{align}\label{choquet}
\int_{\Omega} f\;d\mathcal{H}^{\beta}_\infty\vcentcolon= \int_0^\infty \mathcal{H}^{\beta}_\infty\left(\left\{x \in \Omega: f(x)>t\right\}\right)\;dt.
\end{align}
We next introduce the vector space $L^p(\Omega;\mathcal{H}^{\beta}_\infty)$, which consists of all $\mathcal{H}^{\beta}_\infty$-quasicontinuous functions (see \cite[Section
2]{Chen-Spector} or \cite[Section
3]{PonceSpector}) $f$ which satisfy
\begin{align}\label{norm}
\|f\|_{L^p(\Omega;\mathcal{H}^{\beta}_\infty)}\vcentcolon= \left(\int_{\Omega} |f|^{p} \;d\mathcal{H}^{\beta}_\infty\right)^\frac{1}{p}<+\infty.
\end{align}
For convenience, in the case where $ \Omega = \mathbb{R}^n $, we adopt the notation
\begin{align*}
\|f\|_{L^p(\mathcal{H}^{\beta}_\infty)} \coloneqq \left( \int_{\mathbb{R}^n} |f|^p \, d\mathcal{H}^{\beta}_\infty \right)^\frac{1}{p} < +\infty.
\end{align*}

It can be shown that \eqref{norm} is a quasi-norm, though there exists a norm equivalent to \eqref{norm} such that $L^p(\Omega;\mathcal{H}^{\beta}_\infty)$ equipped with this equivalent norm is a Banach space. 
For $p =1$, this fact is written in \cite[Theorem 2.8]{Chen-Spector}. For $p>1$ one then follows the standard arguments in measure theory. Additionally, by Proposition 1 in \cite{Adams1988} we have the topological dual of $L^1(\Omega;\mathcal{H}^{\beta}_\infty)$ is the Morrey space
\begin{align*}
\mathcal{M}^{\beta}(\Omega)\vcentcolon= \left\{ \nu \in M_{loc}(\Omega): \|\mu\|_{\mathcal{M}^\beta (\Omega)} \vcentcolon= \sup_{x \in \Omega,r>0} \frac{|\mu|(B(x,r) \cap \Omega)}{\;r^\beta}<\infty \right\},
\end{align*}
where $M_{loc}(\Omega)$ is the set of locally finite Radon measures in $\Omega$ (see \cite{Adams1988}*{p.~118}).  It then follows from this duality that
\begin{align}\label{HB}
\int_{\Omega} |f|\;d\mathcal{H}^{\beta}_\infty \asymp \sup_{\|\nu\|_{\mathcal{M}^\beta (\Omega)} \leq 1}  \left|\int_{\Omega} f \;d\nu \right|
\end{align}
for $f \in L^1(\Omega;\mathcal{H}^{\beta}_\infty)$. Moreover, we adopt the convention that a locally integrable function $f :\mathbb{R}^n \to \mathbb{R}$ is said to be in Morrey space $\mathcal{M}^\beta(\Omega)$ provided
\begin{align*}
 \Vert f \Vert_{\mathcal{M}^\beta(\Omega)} :=   \sup_{x \in \Omega,r>0} \int_{\Omega \cap B(x,r)} |f|\; dx < \infty
\end{align*}

We note that if $f\in W^{1,p} (\Omega)$, then $f$ is considered identical to $g \in W^{1,p} (\Omega)$, provided $f(x) = g(x)$ for Lebesgue almost every $x \in \Omega$. Therefore, it is necessary to consider the precise representative of functions in the Sobolev space if we want to establish bounds for $\int_{\Omega } |f| \; d\mathcal{H}^\beta_\infty$ when $f$ is in $W^{1,p} (\Omega)$. The notation we use is based on \cite{EvansGariepy} as follows.
\begin{definition}
    Let $\Omega$ be an open set in $\mathbb{R}^n$ and $u\in L^1_{loc} (\Omega)$. Then the precise representative $u^*$ of $u$ is defined as 
 \begin{equation}\label{preciserepresentative}
    u^*(x) =
    \begin{cases*}
      \lim\limits_{r \to 0} \frac{1}{r^n} \int_{B_r(x)} u(y) \; dy & if x \text{ is a Lebesgue point of } u  \\
      0        & otherwise.
    \end{cases*}
  \end{equation}
  Here $B_r(x)$ denotes the ball with centre $x$ and radius $r$.
\end{definition}
We recall that if $u \in W^{1,1} (\Omega),$ then the Lebesgue points of $u$ exist for $\mathcal{H}^{n-1}$-almost everywhere (see Theorem 4.19 and Theorem 5.12 in \cite{EvansGariepy} or Theorem 3.4.2 in \cite{Ziemerbook}).

The following Poincar\'e-Sobolev inequalities, presented in the sense of Choquet with respect to Hausdorff content, directly follow from Theorem 3.7 in \cite{Petteri_2023} through a standard argument of approximating Sobolev functions by smooth functions.

\begin{lemma}\label{poincare}
    Let $0 \leq \sigma <1$, $1 < p <n$, $\beta= n-\sigma p $ and $B$ be an open ball in $\mathbb{R}^n$. If $u \in W^{1,p}(B)$, then there exists a constant $c$ depending only on $n$, $\sigma $ and $p$ such that \begin{align}\label{Poincareduetopetteri}
        \inf\limits_{c \in \mathbb{R}} \left( \int_B | u(x) - c|^{\frac{p \beta}{ n - p}} \; d \mathcal{H}^{\beta}_\infty \right)^{ \frac{n - p}{ p \beta}} \leq C \left( \int_B | \nabla u (x) |^p \; dx \right)^{\frac{1}{p}},
    \end{align}
    where $u $ on the left hand side is understood  as its precise representative. 
\end{lemma}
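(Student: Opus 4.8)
\emph{Strategy.} Theorem 3.7 of \cite{Petteri_2023} gives \eqref{Poincareduetopetteri} for functions continuous on $B$, hence in particular for any $v\in C^{\infty}(B)\cap W^{1,p}(B)$ — for such $v$ the left‑hand side is unambiguous, since the precise representative of a continuous function is the function itself. The plan is to propagate this to an arbitrary $u\in W^{1,p}(B)$ by approximating $u$ in the Meyers--Serrin sense and passing to the limit. The one point requiring care is that the Choquet integral $\int_{B}\cdot\,d\mathcal{H}^{\beta}_{\infty}$ does not ignore $\mathcal{L}^{n}$-null sets, so the approximants must be shown to converge to the \emph{precise} representative $u^{*}$ in a correspondingly fine sense, not merely Lebesgue-a.e.

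\emph{Carrying it out.} Put $q:=\tfrac{p\beta}{n-p}$ and note $q>p>1$ because $\beta=n-\sigma p>n-p$. Choose $u_{k}\in C^{\infty}(B)\cap W^{1,p}(B)$ with $u_{k}\to u$ in $W^{1,p}(B)$, so $\|\nabla u_{k}\|_{L^{p}(B)}\to\|\nabla u\|_{L^{p}(B)}$ and $u_{k}\to u$ in $L^{1}(B)$. By Theorem 3.7 there are $c_{k}\in\mathbb{R}$ with
\[
\Bigl(\int_{B}|u_{k}-c_{k}|^{q}\,d\mathcal{H}^{\beta}_{\infty}\Bigr)^{1/q}\le 2C\,\|\nabla u_{k}\|_{L^{p}(B)}.
\]
A truncation argument — splitting the layer-cake integral at a fixed height, using the Chebyshev inequality for the Choquet integral (legitimate since $q>1$) and the elementary comparison $\int_{B}g\,dx\lesssim\int_{B}g\,d\mathcal{H}^{\beta}_{\infty}$ for $g\ge0$ (valid on a bounded set, since covering by balls of radius at most $\mathrm{diam}\,B$ gives $\mathcal{L}^{n}(E)\lesssim\mathcal{H}^{\beta}_{\infty}(E)$ for $E\subseteq B$) — bounds $\int_{B}|u_{k}-c_{k}|\,dx$ uniformly in $k$, whence $|c_{k}|\,|B|\le\int_{B}|u_{k}-c_{k}|\,dx+\|u_{k}\|_{L^{1}(B)}$ stays bounded; so along a subsequence $c_{k}\to c_{0}$. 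Along a further subsequence, the classical capacitary convergence theorem for Sobolev functions (see e.g. \cite{Ziemerbook}) gives $u_{k}\to u$ pointwise outside a set of Sobolev $(1,p)$-capacity zero, where on the right $u$ is identified with its precise representative $u^{*}$ (which agrees $(1,p)$-quasi-everywhere with the quasicontinuous representative); since $\beta>n-p$, a set of $(1,p)$-capacity zero has zero $\beta$-dimensional Hausdorff measure, hence is $\mathcal{H}^{\beta}_{\infty}$-null, so $|u_{k}-c_{k}|^{q}\to|u^{*}-c_{0}|^{q}$ $\mathcal{H}^{\beta}_{\infty}$-a.e. Fatou's lemma for the Choquet integral with respect to Hausdorff content (see \cite{Adams1988}, \cite{PonceSpector}) now yields
\[
\int_{B}|u^{*}-c_{0}|^{q}\,d\mathcal{H}^{\beta}_{\infty}\le\liminf_{k}\int_{B}|u_{k}-c_{k}|^{q}\,d\mathcal{H}^{\beta}_{\infty}\le(2C)^{q}\,\|\nabla u\|_{L^{p}(B)}^{q},
\]
and taking $q$-th roots and bounding $\inf_{c}$ by the value at $c=c_{0}$ gives \eqref{Poincareduetopetteri}.

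\emph{Main obstacle.} The routine parts are the density, the convergence of $\|\nabla u_{k}\|_{L^{p}}$, the boundedness of $(c_{k})$, and the set-function comparisons; the substantive point is the mismatch between ordinary convergence and the Choquet integral. One must either invoke capacitary convergence together with the comparison between Sobolev $(1,p)$-capacity and $\beta$-dimensional Hausdorff content — and it is exactly here that the hypothesis $\sigma<1$, equivalently $\beta>n-p$, is indispensable — or, alternatively, deduce from Theorem 3.7 applied to the differences $u_{k}-u_{m}$ that $(u_{k})$ is Cauchy modulo constants in the Banach space $L^{q}(B;\mathcal{H}^{\beta}_{\infty})$ and then reconcile the resulting limit with $u^{*}$ using $u_{k}\to u$ in $L^{1}(B)$. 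When $\sigma=0$ one has $\beta=n$, $\mathcal{H}^{n}_{\infty}\asymp\mathcal{L}^{n}$ on $B$, and the statement reduces to the classical Sobolev--Poincar\'e inequality.
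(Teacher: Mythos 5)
Your proof is correct, but the route you take to the crucial identification of the limit with the precise representative differs from the paper's. Both arguments start from Theorem~3.7 of \cite{Petteri_2023} applied to smooth approximants $u_k\to u$ in $W^{1,p}(B)$ and close with Fatou's lemma for Choquet integrals; the divergence is in how one certifies that $u_k\to u^*$ pointwise $\mathcal{H}^{\beta}_{\infty}$-a.e. You invoke the classical quasi-everywhere convergence of a subsequence of $(u_k)$ to the quasicontinuous representative (which agrees with $u^*$ $(1,p)$-q.e.) and then transfer from $(1,p)$-capacity nullity to $\mathcal{H}^{\beta}_{\infty}$-nullity via the standard bound that null $(1,p)$-capacity forces $\mathcal{H}^{s}=0$ for every $s>n-p$ — this is exactly where $\sigma<1$, i.e.\ $\beta>n-p$, enters for you, which you correctly flag. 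The paper instead works entirely inside $L^{p\beta/(n-p)}(B;\mathcal{H}^{\beta}_{\infty})$: after passing to a rapidly convergent subsequence it applies Theorem~3.7 to consecutive \emph{differences} $u_{m}-u_{m+1}$ together with the easy bound on $|(u_m-u_{m+1})_B|$ to show $(u_m)$ is Cauchy in that Choquet--Lebesgue space, extracts a limit $\bar u$ with $\mathcal{H}^{\beta}_{\infty}$-a.e.\ pointwise convergence (by the Cauchy-in-measure machinery of \cite{PonceSpector}), and then identifies $\bar u=u^*$ $\mathcal{H}^{\beta}_{\infty}$-a.e.\ by appealing to the Lebesgue-point result of \cite{ChenOoiSpector} for functions in $L^{q}(\mathcal{H}^{\beta}_{\infty})$. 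Your approach buys independence from the capacitary Lebesgue-point theory of \cite{ChenOoiSpector} at the price of importing the Sobolev-capacity-versus-Hausdorff-measure comparison; the paper's approach is more self-contained within the Choquet framework already set up in the preliminaries and generalises more readily to higher-order settings (compare the proof of Theorem~\ref{MorreyBMO2}), which is presumably why the authors chose it. Your observation that an alternative is to run the Cauchy-modulo-constants argument in $L^{q}(B;\mathcal{H}^{\beta}_{\infty})$ is in fact precisely what the paper does. One small simplification to your truncation step: once you have the comparison $\mathcal{L}^n(E)\lesssim_{B,\beta}\mathcal{H}^{\beta}_{\infty}(E)$ for $E\subseteq B$, you can apply it directly to $|u_k-c_k|^q$ and then use ordinary H\"older on $B$ to bound $\int_B|u_k-c_k|\,dx$, bypassing the layer-cake split.
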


\begin{proof}
    Suppose that $u \in W^{1,p } (\mathbb{R}^n)$.  Then there exists a sequence of functions $\{u_m\}_{m=1}^\infty \subseteq  W^{1,p } (B) \cap C^\infty (B)$ such that
    $u_m \to u $ in $W^{1,p } (B)$. By choosing a  subsequence if necessary, we may assume that $\Vert u_m - u_{m+1} \Vert_{W^{1,p} (B)} \leq \frac{1}{4^m}$. It follows that 
    \begin{align*}
        u_m(x) \to u(x) \text{ for Lebesgue almost every }  x \in B
    \end{align*}
and
    \begin{align}\label{meanvalueofuminB}
        |(u_{m+1}- u_m)_B| &\leq \frac{1}{|B|} \int_B |u_{m+1} (x) - u_m (x)|\; dx\\ \nonumber
        & = \left( \frac{1}{|B|} \int_B |u_{m+1} (x) - u_m (x)|^p \; dx  \right)^{\frac{1}{p}} \\ \nonumber
        & \leq  \left( \frac{1}{|B|} \right)^{\frac{1}{p}} \frac{1}{4^m}. \nonumber
    \end{align}
Moreover, Theorem 3.7 in \cite{Petteri_2023} yields that for $m \in \mathbb{N}$, there exists a constant $C = C(n,k,p)$ such that
\begin{align}\label{PoincareduetopetteriforCinfinity}
        \left( \int_B | u_m(x) - (u_m)_B|^{\frac{p \beta}{ n - p}} \; d \mathcal{H}^{n - \sigma p}_\infty \right)^{ \frac{n - p}{ p (n - \sigma  p)}} \leq C \left( \int_B | \nabla u_m (x) |^p \; dx \right)^{\frac{1}{p}},
    \end{align}
where 
\begin{align*}
    \left( u_m \right)_B = \frac{1}{|B|} \int_B u(x) \; d x
\end{align*}
denotes the average of $u_m$ over $B.$

The combination of (\ref{meanvalueofuminB}) and (\ref{PoincareduetopetteriforCinfinity}) then gives 
\begin{align}\label{Rapidconvergeofum}
  \nonumber  \Vert u_m - u_{m+1} \Vert_{L^{\frac{p\beta}{n-p}} (B; \mathcal{H}^{\beta}_\infty)} &\leq  \Vert u_m - u_{m+1} -( u_m -u_{m+1})_B \Vert_{L^{\frac{p \beta}{n-p}} (B; \mathcal{H}^{\beta }_\infty)}\\
 \nonumber   & + \Vert ( u_m - u_{m+1} )_B \Vert_{L^{\frac{p \beta}{n-p}} (B; \mathcal{H}^{\beta }_\infty)}\\
 \nonumber   & \leq C \Vert \nabla (u_m -  u_{m+1}) \Vert_{L^p (B)}\\
   \nonumber   &+ \mathcal{H}^{\beta }_\infty (B)^{\frac{n-p}{ p \beta}} \left| (u_m - u_{m+1})_B \right|\\
  & \leq  \frac{C}{4^m} \left( 1+ \mathcal{H}^{\beta }_\infty (B)^{\frac{n-p}{ p \beta}} \frac{1}{|B|^{\frac{1}{p}}}  \right).
\end{align}
By a standard argument of Cauchy in measure (see Proposition 2.5 and Proposition 6.1 in \cite{PonceSpector} for example), there exists a function $\Bar{u} \in L^{\frac{p(\beta)}{n-p}} (B; \mathcal{H}^{\beta}_\infty)$ such that 
\begin{align*}
    u_m \to \Bar{u} \text{ in } L^{\frac{p \beta}{n-p}} (B; \mathcal{H}^{\beta}_\infty)
\end{align*}
and 
\begin{align*}
    u_m (x) \to \Bar{u} (x) \text{ for }\mathcal{H}^{\beta}_\infty \text{ almost every } x.
\end{align*}
Using Fatou's Lemma for the Choquet integral with respect to Hausdorff content (see (1.2) in \cite{PonceSpector} for example), we obtain
\begin{align*}
& \left( \int_B | \Bar{u}(x) - (\Bar{u})_B|^{\frac{p \beta}{ n - p}} \; d \mathcal{H}^{\beta}_\infty \right)^{ \frac{n - p}{ p \beta}}\\
&=    \left( \int_B \lim\limits_{m \to \infty}| \Bar{u}_m(x) - (\Bar{u}_m)_B|^{\frac{p \beta}{ n - p}} \; d \mathcal{H}^{\beta}_\infty \right)^{ \frac{n - p}{ p \beta}}\\
& \leq  \liminf_{m \to \infty}  \left( \int_B | \Bar{u}_m(x) - (\Bar{u}_m)_B|^{\frac{p \beta}{ n - p}} \; d \mathcal{H}^{\beta}_\infty \right)^{ \frac{n - p}{ p \beta}}\\
&\leq \lim\limits_{m \to \infty}C \left(\int_B |\nabla u_m |^p \; dx \right)^{\frac{1}{p}}\\
&=C  \left(\int_B |\nabla u|^p \; dx \right)^{\frac{1}{p}}.
\end{align*}
It is now sufficient to show that the precise representative $u^*$ of $u$ satisfying $u^* = \Bar{u}$ for $\mathcal{H}^{\beta }_\infty$ almost everywhere. Since $\Bar{u}(x) = u(x)$ for Lebesgue almost every $x \in B$, we have $(\Bar{u})_B = u_B$ and  the precise representative of $\Bar{u}$ and $u$ defined as in (\ref{preciserepresentative}) are the same. As $\Bar{u} \in L^{\frac{p(\beta)}{n-p}} (B; \mathcal{H}^{\beta}_\infty)$ implies that the Lebesgue points of $\Bar{u}$ exist $\mathcal{H}^{\beta}_\infty$ almost everywhere in $B$ (see \cite{ChenOoiSpector}*{Corollary 1.3}), we have $ u^* (x) = (\Bar{u})^* (x)= \Bar{u} (x)  $ for $\mathcal{H}^{\beta}_\infty$ almost everywhere in $B$. Thus $u^* = \Bar{u} \in L^p (B; \mathcal{H}^{\beta}_\infty)$.

\end{proof}

We next recall a result which is due to Meyers and Ziemer \cite{Meyers-Ziemer} and Maz'ya \cite{maz_trace} (see also \cite{Adams1988}).

\begin{lemma}\label{AdMaz}
     Let $0\leq k < n\in \mathbb{N}$ and $u \in C_c^k (\mathbb{R}^n)$. Then there exists $C=C(n)$ such that 
\begin{align}\label{Adamsestimate}
    \int_{\mathbb{R}^n} | u| \; d\mathcal{H}^{n-k}_\infty
\leq C \int_{\mathbb{R}^n} | \nabla^k u| \; dx. 
\end{align}

\end{lemma}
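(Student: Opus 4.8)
The plan is to prove the inequality by induction on $k$, lowering the dimension of the Hausdorff content by one unit at a time. The case $k=0$ is just the comparability $\int_{\mathbb{R}^n}f\,d\mathcal{H}^n_\infty\asymp\int_{\mathbb{R}^n}f\,dx$, a consequence of a Vitali covering argument. The case $k=1$, i.e. $\int_{\mathbb{R}^n}|u|\,d\mathcal{H}^{n-1}_\infty\le C(n)\int_{\mathbb{R}^n}|\nabla u|\,dx$, is the Meyers--Ziemer inequality and is the one genuinely geometric ingredient: for a.e. $t>0$ the superlevel set $E_t:=\{|u|>t\}$ has finite perimeter $P(E_t)$, Gustin's boxing inequality gives $\mathcal{H}^{n-1}_\infty(E_t)\le C(n)\,P(E_t)$, and integrating in $t$ and invoking the coarea formula yields $\int_{\mathbb{R}^n}|u|\,d\mathcal{H}^{n-1}_\infty=\int_0^\infty\mathcal{H}^{n-1}_\infty(E_t)\,dt\le C(n)\int_0^\infty P(E_t)\,dt=C(n)\int_{\mathbb{R}^n}|\nabla u|\,dx$.

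For the inductive step ($2\le k\le n-1$; there is nothing to prove when $n\le 2$) the key is the ``dimension--drop'' Poincar\'e inequality
\[
\int_{\mathbb{R}^n}|v|\,d\mathcal{H}^{n-j-1}_\infty\le C(n)\int_{\mathbb{R}^n}|\nabla v|\,d\mathcal{H}^{n-j}_\infty,\qquad v\in C^1_c(\mathbb{R}^n),\ \ 1\le j\le n-2 ,
\]
which is then iterated. To establish it I would combine: (i) the elementary pointwise bound $|v(x)|\le c_n\int_{\mathbb{R}^n}|x-y|^{1-n}|\nabla v(y)|\,dy=c_n\,I_1(|\nabla v|)(x)$, obtained by integrating the fundamental theorem of calculus over directions; (ii) the duality \eqref{HB} between $L^1(\mathbb{R}^n;\mathcal{H}^\beta_\infty)$ and the Morrey space $\mathcal{M}^\beta(\mathbb{R}^n)$; and (iii) the boundedness of the Riesz potential $I_1\colon\mathcal{M}^\sigma(\mathbb{R}^n)\to\mathcal{M}^{\sigma+1}(\mathbb{R}^n)$ for $0<\sigma<n-1$ (a theorem of Adams). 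Concretely, fix $\mu\ge 0$ with $\|\mu\|_{\mathcal{M}^{n-j-1}}\le 1$; Tonelli and (i) give $\int|v|\,d\mu\le c_n\int|\nabla v|\,(I_1\mu)\,dy$, by (iii) the measure $(I_1\mu)\,dy$ lies in $\mathcal{M}^{n-j}$ with norm $\le C_0(n)$, so \eqref{HB} yields $\int|\nabla v|\,(I_1\mu)\,dy\le C(n)\int|\nabla v|\,d\mathcal{H}^{n-j}_\infty$; taking the supremum over $\mu$ and using \eqref{HB} once more produces the displayed inequality. Applying it with $j=k-1,k-2,\dots,1$ to $v=u,\nabla u,\dots,\nabla^{k-2}u$ (all in $C^1_c$; vector-valued $v$ handled componentwise, absorbing combinatorial constants into $C(n)$) telescopes to $\int|u|\,d\mathcal{H}^{n-k}_\infty\le C(n)^{k-1}\int|\nabla^{k-1}u|\,d\mathcal{H}^{n-1}_\infty$, and one further application of the $k=1$ case to $\nabla^{k-1}u$ closes the induction with a final constant $C(n)^k\le C(n)^{n-1}$ depending only on $n$.

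I expect the $k=1$ base case to be the main obstacle: the harmonic-analytic machinery of the inductive step genuinely breaks down there, because $I_1$ does not map $\mathcal{M}^{n-1}$ into $\mathcal{M}^{n}$ --- the potential of an $(n-1)$-dimensional measure is logarithmically divergent --- so one cannot avoid a geometric input, namely Gustin's boxing inequality (equivalently, the domination of Hausdorff content by perimeter). A secondary technical point is the verification of (iii) at the borderline Morrey exponents: splitting $I_1\mu(y)$ over $B(z,2r)$ and its complement and estimating $\int_{B(z,r)}I_1\mu\,dy$ directly shows it is $\lesssim r^{\sigma+1}$ precisely when $\sigma<n-1$, which is exactly the range $\sigma=n-j-1\in[1,n-2]$ occurring in the iteration. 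Finally, since $u\in C^k_c$ is already smooth, no quasicontinuity or precise-representative issues intervene here, in contrast with Lemma \ref{poincare}.
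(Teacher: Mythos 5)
The paper does not supply a proof of Lemma~\ref{AdMaz}; it simply cites Meyers--Ziemer \cite{Meyers-Ziemer}, Maz'ya \cite{maz_trace}, and Adams \cite{Adams1988}, so there is no in-paper argument to compare against. Your proposal is a correct, self-contained derivation, and the overall architecture (boxing inequality at the base, Riesz--Morrey iteration above) is the standard way one would actually prove the cited result. The base cases are right: $k=0$ is the identity $\mathcal{H}^n_\infty=\mathcal{L}^n$ under the chosen normalization, and $k=1$ is Meyers--Ziemer via Gustin's boxing inequality $\mathcal{H}^{n-1}_\infty(E_t)\le C\,P(E_t)$, the layer-cake formula \eqref{choquet}, and the coarea formula. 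The inductive dimension-drop step is also sound: for $\|\mu\|_{\mathcal M^{n-j-1}}\le 1$, $\mu\ge0$, and $v\in C^1_c$, the chain
\[
\int |v|\,d\mu \;\le\; c_n\int I_1(|\nabla v|)\,d\mu \;=\; c_n\int |\nabla v|\,(I_1\mu)\,dy \;\le\; C(n)\int |\nabla v|\,d\mathcal H^{n-j}_\infty,
\]
combined with the duality \eqref{HB}, yields
\[
\int |v|\,d\mathcal H^{n-j-1}_\infty \;\le\; C(n)\int |\nabla v|\,d\mathcal H^{n-j}_\infty,
\]
which telescopes exactly as you describe. Your check of the Morrey boundary is correct: splitting $I_1\mu$ at scale $2r$ gives a near contribution $\lesssim r\cdot\mu(B(z,2r))\lesssim r^{\sigma+1}$ and a far contribution governed by $\sum_{i\ge1}2^{i(\sigma-n+1)}$, which converges precisely when $\sigma<n-1$; this matches $\sigma=n-j-1$ with $j\ge1$, and since $\sigma$ ranges over the finite set $\{1,\dots,n-2\}$ the constants combine into a single $C(n)$. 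Your observation that the case $\sigma=n-1$ fails (the far-field sum diverges logarithmically), thereby forcing the geometric input at $k=1$, is the correct diagnosis of why the iteration cannot reach the bottom on its own. The only point I would make explicit is that at each stage $v=\nabla^i u$ is tensor-valued, so the pointwise Riesz bound and the duality argument are applied componentwise, with $|\nabla v|=|\nabla^{i+1}u|$ on the right; the resulting combinatorial factors depend only on $n$ because the tensor rank never exceeds $k\le n-1$, exactly as you note.
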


We introduce an auxiliary function that will be utilized in the sequel. Let $Q \subset \R^n$, and denote $\ell(Q)$ as the side length of $Q$ and $c_Q$ as the center of $Q$. Let $\phi$ be a function in $C_c^\infty (\R^n)$ satisfying $0 \leq \phi \leq 1$, $\phi(x) = 1$ for all $x = (x_1, x_2, \ldots, x_n)$ satisfying $|x_i|\leq \frac{1}{2}$, and $\phi(x) = 0$ for $x = (x_1, x_2, \ldots, x_n)$ satisfying $|x_i| > 1$. Define the function $\phi_Q$ by $\phi_Q(x) = \phi\left(\frac{x - c_Q}{\ell(Q)}\right)$ for $x \in \R^n$ and $\delta > 0$. It is clear that $\phi_Q \in C_c^\infty(\R^n)$ satisfies $\phi_Q (x) = 1$ for $x \in Q$ and $\phi_Q (x) = 0$ for $x \notin 2Q$. Moreover, for any $k \in \R^n$, there exists a constant $C$ such that
\begin{align}\label{scaleestimate}
    \Vert \nabla^k \phi_Q \Vert_{L^\infty (\R^n)} \leq  \frac{C}{ \ell(Q)^{k}}.
\end{align}

\section{Proofs of the Main Results}

We begin with the

\begin{proof}[Proof of Theorem \ref{BMObetabyPoincare}]
   For any $\sigma  \in (0,1)$ and $p \in (1,n)$ satisfying $\beta = n - \sigma p $, Lemma \ref{poincare} yields that there exists a constant $C $ depending on $n$, $p$ and $\beta$ such that
\begin{align}\label{mainresult2estimate1}
        \inf\limits_{c \in \mathbb{R}} \left( \int_Q | u(x) - c|^{\frac{p \beta}{ n - p}} \; d \mathcal{H}^{\beta}_\infty \right)^{ \frac{n - p}{ p \beta}} \leq C \left( \int_Q | \nabla u (x) |^p \;dx \right)^{\frac{1}{p}}.
    \end{align}
Now choosing $p = n - \varepsilon$ with $\varepsilon = \min (p \beta, n-1)$, we have $\frac{p \beta}{ n-p} >1$.   An application of H\"older's inequality of the Hausdorff content (see \cite{Petteri_2023}*{(C7) on p.~5} for example) and deduce that 
\begin{align}\label{mainresult2estimate2}
    \inf\limits_{c \in \mathbb{R}} \left( \int_Q | u(x) - c| \; d \mathcal{H}^{\beta}_\infty \right)
    & \leq 2\inf\limits_{c \in \mathbb{R}} \left( \int_Q | u(x) - c|^{\frac{p \beta}{ n - p}} \; d \mathcal{H}^{\beta}_\infty \right)^{ \frac{n - p}{ p \beta}} \Vert \chi_Q \Vert_{L^{ \frac{p \beta}{ p \beta -n +p}}(Q;\mathcal{H}^\beta_\infty)} \nonumber \\
    &\leq 2\Vert \nabla u \Vert_{L^p (Q; \mathbb{R}^n)}  \Vert \chi_Q \Vert_{L^{ \frac{p \beta}{ p \beta -n +p}}(Q;\mathcal{H}^\beta_\infty)}.
\end{align}
In particular,  H\"older's inequality in Lorentz scale yields that 
\begin{align}
    \Vert \nabla u \Vert_{L^p(Q; \mathbb{R}^n)} \leq C \Vert \nabla u \Vert_{L^{n, \infty}(Q; \mathbb{R}^n)} \Vert \chi_Q \Vert_{L^{s,1} (Q; \mathbb{R}^n)},
\end{align}
where $\frac{1}{p} = \frac{1}{n}+ \frac{1}{s}$. 
Since
\begin{align*}
    \Vert \chi_Q \Vert_{L^{ \frac{p \beta}{ p \beta -n +p}}(Q;\mathcal{H}^\beta_\infty)} = C \ell(Q)^{\frac{p \beta }{p \beta -n +p}}
\end{align*}
and 
\begin{align*}
    \Vert \chi_Q \Vert_{L^{s,1} (Q; \mathbb{R}^n)} = C \ell(Q)^{\frac{n-p}{p}},
\end{align*}
The combination of (\ref{mainresult2estimate1}) and (\ref{mainresult2estimate2}) then gives that 
\begin{align*}
    \inf\limits_{c \in \mathbb{R}} \int_Q |u(x) -c| \; d \mathcal{H}^\beta_\infty \leq C \ell(Q)^\beta \Vert \nabla u \Vert_{L^{n, \infty} (Q; \mathbb{R}^n)},
\end{align*}
    which completes the proof.
\end{proof}

We next give the

\begin{proof}[Proof of Theorem \ref{MorreyBMO}]
It is sufficient to prove (\ref{Poincare1}) since (\ref{Morreyn1eestimate}) follows immediately by dividing both sides with $r^{n-1}$.  To this end, We first assume that $ u \in W^{1,1} (B(0,1))$. An application of Theorem 5.13.2 in \cite{Ziemerbook} then yields that there exist constants $b$,  $C \in \mathbb{R} $ such that 
\begin{align*}
    \int_{B_1(0)} |u-b| \; d\mu \leq C \int_{B(0,1)} |\nabla u | \; dx 
\end{align*}
holds for all the locally finite Radon measures  $\mu $ satisfying 
\begin{align}\label{ballgrothcondition}
    \mu(B(x,r)) \leq r^{n-1}.
\end{align}
Taking supremum over all the locally finite Radon measures  $\mu $ satisfying (\ref{ballgrothcondition}), we obtain
\begin{align*}
    \int_{B(0,1)} | u -b| \; d\mathcal{H}^{n-1}_\infty \leq C \int_{B(0,1)} |\nabla u| \; dx ,
\end{align*}
by (\ref{HB}).

For general $B= B(x,r) \subseteq \Omega$, let $w(y) = u(x+ry)$. Then $w \in W^{1,1} (B_1(0))$ and thus there exists $b$, $c \in \mathbb{R}$ such that 
\begin{align*}
    \int_{B(0,1)} |u(x+ ry ) -b|\; d\mathcal{H}^{n-1}_\infty \leq C r \int_{B(0,1)} | \nabla u (x +ry)| \; dx.
\end{align*}
By changing of variables we obtain
\begin{align*}
    r^{n-1 } \int_B | u-b |\; d\mathcal{H}^{n-1}_{\infty} \leq C r^{n-1} \int_B |\nabla u |\; dx,
\end{align*}
which upon division of both sides by $r^{n-1}$ yields the inequality \eqref{Poincare1}.
    
\end{proof}

We now give the
\begin{proof}[Proof of Theorem \ref{MorreyBMO2}]
Let $\{ u_m\}_{m=1}^\infty$ be a sequence of functions in $C_c^\infty(\Omega)$ such that $u_m \to u$ in $W^{k,1}_0 (\Omega)$. By choosing a subsequence if necessary, we may assume
\begin{align*}
    \int_\Omega |\nabla^k u_{m+1} - \nabla^k u_m| \; dx \leq \frac{1}{4^m} ,
\end{align*}
which yields that
\begin{align*}
    \int_\Omega |u_{m+1} - u_m| \; d\mathcal{H}^{n-k}_\infty &\leq C \int_\Omega |\nabla^k u_{m+1} - \nabla^k u_m| \; dx \\
    &\leq \frac{C}{4^m}
\end{align*}
by (\ref{Adamsestimate}). This implies that $u_m$ converges to a function $\Bar{u}$ in $L^1(\mathcal{H}^{n-k}_\infty)$ and $u_m(x) \to \Bar{u}(x)$ for $\mathcal{H}^{n-k}$ almost every $x \in \mathbb{R}^n$. Moreover, it follows from a similar argument of the last part of the proof of Lemma \ref{poincare} that $(\Bar{u})_B$ = $u_B$ and the precise representative of $u$ and $\Bar{u}$ are identical in $L^1(\mathcal{H}^{n-k}_\infty)$.

Now fix a cube $Q \subset \mathbb{R}^n$ and $\phi_Q$ be defined as above. We apply Fatou's lemma for the Choquet integral with respect to Hausdorff content (see (1.2) in \cite{PonceSpector} for example) and deduce that 
\begin{align*}
    \int_Q |\Bar{u} - u_{2Q}| \; d\mathcal{H}^{n-k}_\infty & = \int_Q \lim_{m\to \infty} |u_m - u_{2Q}| \; d\mathcal{H}^{n-k}_\infty\\
    &\leq \liminf_{m \to \infty} \int_Q |u_m - u_{2Q}|\; d \mathcal{H}^{n-k}_\infty\\
    &= \liminf_{m \to \infty} \int_Q |(u_m - u_{2Q}) \phi_Q|\; d \mathcal{H}^{n-k}_\infty\\
    &\leq  C \liminf_{m \to \infty} \int_{\mathbb{R}^n} \left| \nabla^k \left[  (u_m  -u_{2Q} ) \phi_Q  \right] \right|\;dx \\
    &=  C \liminf_{m \to \infty} \int_{2Q} \left| \nabla^k \left[  (u_m  -u_{2Q} ) \phi_Q  \right] \right|\;dx,
\end{align*}
where we use Lemma \ref{AdMaz} in the last inequality. Moreover, we note that inequality (\ref{scaleestimate}) gives
\begin{align*}
    \left| \nabla^k \left(  (u_m  -u_{2Q} ) \phi_Q  \right) \right| & \leq |(u_m  -u_{2Q} )| \left| \nabla^{k} \phi_Q \right| +  \sum_{j=1}^k \left| \nabla^{k-j} \phi_Q \right| \left| \nabla^j u_m \right| \\
    & \lesssim |(u_m  -u_{2Q} )|  \ell(Q)^{-k}+ \sum_{j=1}^k \ell(Q)^{j-k} \left| \nabla^j u_m  \right|
\end{align*}
and thus, utilizing the classical Poincar\'e inequality for the first term, we find
\begin{align*}
    \int_Q |\Bar{u} - u_{2Q}| \; d\mathcal{H}^{n-k}_\infty & \leq C \sum_{j=1}^k \ell(Q)^{j-k} \liminf_{m \to \infty} \int_{2Q} \left| \nabla^j u_m  \right| \; dx \\
    & = C \sum_{j=1}^k \ell(Q)^{j-k} \Vert \nabla^j u \Vert_{L^1 (2Q)} \\
    & \leq C \ell(Q)^{n-k} \sum_{j=1}^k \Vert \nabla^j u \Vert_{\mathcal{M}^{n-j}(\Omega)}.
\end{align*}
Now dividing both sides with $\ell(Q)^{n-k}$ in the last estimate, we obtain 
\begin{align}
   \nonumber \frac{1}{\ell(Q)^{n-k}} \int_Q |\Bar{u}-u_Q| \; d \mathcal{H}^{n-k}_\infty \leq C \sum_{j=1}^k \Vert \nabla^j u \Vert_{\mathcal{M}^{n-j}(\Omega)}
\end{align}
and (\ref{BMO2estimate}) follows by taking the supremum over all cubes $Q \subseteq \Omega$. This completes the proof.
\end{proof}

We next prove Theorems \ref{estimatefornonlinearpLaplacian} and \ref{estimatefornonlinearpLaplacian_A}.

\begin{proof}[Proof of Theorems \ref{estimatefornonlinearpLaplacian} and \ref{estimatefornonlinearpLaplacian_A}]
Let $\{F_m\}$ be a smooth sequence for which $\operatorname*{div}F_m=0$, $F_m \to F$ in $L^1(\Omega;\mathbb{R}^n)$, and
\begin{align}\label{mollification_noincrease}
\lim_{m \to \infty} \|F_m\|_{L^1(\Omega;\mathbb{R}^n)} =\|F\|_{L^1(\Omega;\mathbb{R}^n)},
\end{align}
(see e.g. \cite[Lemma 3.3]{FG}).  Then the existence results in \cite{DHM,DHM1} (which uses an approximation by mollification) yields that there exist functions $U_m \in BMO(\Omega; \mathbb{R}^n) \cap W^{1,n}_0 (\Omega; \mathbb{R}^n)$ which satisfy
\begin{align*}
\int_{\Omega} |\nabla U_m|^{n-2}\nabla U_m \cdot \nabla \Phi\;dx = \int_{\Omega} F_m \cdot \Phi \;dx,
\end{align*}
for any $\Phi \in W^{1,n}(\mathbb{R}^n; \mathbb{R}^n)$.  The fact that $\nabla U_m \in L^n(\Omega;\mathbb{R}^n)$ allows us to take $\Phi=U_m$ in the preceding inequality to obtain
\begin{align*}
\int_{\Omega} |\nabla U_m|^{n}\;dx = \int_{\Omega} F_m \cdot U_m \;dx,
\end{align*}

By Lemma 3.4 in \cite{BVS}, we have
\begin{align*}
\left|\int_{\Omega} F_m \cdot U_m \;dx \right| \leq C \|F_m\|_{L^1(\Omega)} \|\nabla U_m \|_{L^n(\Omega;\mathbb{R}^{n\times n})},
\end{align*}
and therefore
\begin{align*}
\int_{\Omega} |\nabla U_m|^{n}\;dx \leq C \|F_m\|_{L^1(\Omega;\mathbb{R}^n)} \|\nabla U_m \|_{L^n(\Omega;\mathbb{R}^{n\times n})}.
\end{align*}
Together with the inequality \eqref{mollification_noincrease} this implies
\begin{align*}
\|\nabla U_m \|_{L^n(\Omega;\mathbb{R}^{n\times n})} &\leq C^{1/(n-1)} \|F_m\|^{1/(n-1)}_{L^1(\Omega;\mathbb{R}^n)} \\
&\leq C^{1/(n-1)} \|F\|^{1/(n-1)}_{L^1(\Omega;\mathbb{R}^n)}.
\end{align*}
This shows $\{\nabla U_m\}_m$ is a bounded sequence in $L^n(\Omega)$, so that up to a subsequence it converges to $\nabla U \in L^n(\Omega)$, while lower-semicontinuity of the norm with respect to weak-convergence yields the estimate
\begin{align*}
\|\nabla U \|_{L^n(\Omega;\mathbb{R}^{n\times n})} \leq C^{1/(n-1)} \|F\|^{1/(n-1)}_{L^1(\Omega;\mathbb{R}^n)}.
\end{align*}
\end{proof}
We conclude the paper with the 
\begin{proof}[Proof of Theorem \ref{estimatefornonlinearpLaplacian_Lorentz}]
Suppose $F \in L^1(\Omega;\mathbb{R}^n)$ is supported in $\Omega$ and $\operatorname*{div}F=0$.  Set
\begin{align*}
G_i:= -\nabla (-\Delta)^{-1}F_i.
\end{align*}
Then we have
\begin{align*}
\operatorname*{div} G_i = F_i
\end{align*}
in the sense of distributions, while estimates for singular integrals yield
\begin{align*}
    \|G_i\|_{L^{n/(n-1),1}(\mathbb{R}^n;\mathbb{R}^n)} &\lesssim \|I_1F\|_{L^{n/(n-1),1}(\mathbb{R}^n;\mathbb{R}^n)}.
\end{align*}
The fact that $\operatorname*{div}F=0$ allows one to utilize the endpoint estimate \cite[Theorem 1.1]{HS} (see also \cite{HRS}) 
\begin{align*}
\|I_1F\|_{L^{n/(n-1),1}(\mathbb{R}^n;\mathbb{R}^n)} \lesssim\|F\|_{L^1(\mathbb{R}^n;\mathbb{R}^n)}
\end{align*}
to conclude that
\begin{align*}
    \|G_i\|_{L^{n/(n-1),1}(\mathbb{R}^n;\mathbb{R}^n)} &\lesssim \|F\|_{L^1(\mathbb{R}^n;\mathbb{R}^n)}.
\end{align*}
The claim then follows by Martino and Schikorra's \cite[Theorem 1.1]{MS2024} estimate
\begin{align}
\|\nabla U \|_{L^{n,n-1}(B_\theta;\mathbb{R}^{n\times n})}^{n-1} \lesssim \|G\| _{L^{n/(n-1),1}(B_1;\mathbb{R}^n)} + \|\nabla U \|_{L^{n-\epsilon}(B_1;\mathbb{R}^{n\times n})}^{n-1}
\end{align}
and the preceding theorem to absorb the second term on the right-hand-side.
\end{proof}

\section*{Acknowledgments}
Y.-W. Chen is supported by the National Science and Technology Council of Taiwan under research grant number 113-2811-M-002-027.
J. Manfredi is supported by the Simons Foundation under Gift Number 962828. 
D. Spector is supported by the National Science and Technology Council of Taiwan under research grant numbers 110-2115-M-003-020-MY3/113-2115-M-003-017-MY3 and the Taiwan Ministry of Education under the Yushan Fellow Program.

The authors would like to thank Armin Schikorra and Rupert Frank for their comments which have improved the clarity and presentation of the manuscript.

\begin{bibdiv}

\begin{biblist}

\bib{Adams1973}{article}{
   author={Adams, David R.},
   title={Traces of potentials. II},
   journal={Indiana Univ. Math. J.},
   volume={22},
   date={1972/73},
   pages={907--918},
   issn={0022-2518},
   review={\MR{313783}},
   doi={10.1512/iumj.1973.22.22075},
}
\bib{Adams1988}{article}{
   author={Adams, David R.},
   title={A note on Choquet integrals with respect to Hausdorff capacity},
   conference={
      title={Function spaces and applications},
      address={Lund},
      date={1986},
   },
   book={
      series={Lecture Notes in Math.},
      volume={1302},
      publisher={Springer, Berlin},
   },
   isbn={3-540-18905-X},
   date={1988},
   pages={115--124},
   review={\MR{0942261}},
   doi={10.1007/BFb0078867},
}

\bib{AQ2002}{article}{
   author={Auscher, P.},
   author={Qafsaoui, M.},
   title={Observations on $W^{1,p}$ estimates for divergence elliptic
   equations with VMO coefficients},
   language={English, with English and Italian summaries},
   journal={Boll. Unione Mat. Ital. Sez. B Artic. Ric. Mat. (8)},
   volume={5},
   date={2002},
   number={2},
   pages={487--509},
   issn={0392-4041},
   review={\MR{1911202}},
}

\bib{BourgainBrezis2004}{article}{
   author={Bourgain, Jean},
   author={Brezis, Ha\"{\i}m},
   title={New estimates for the Laplacian, the div-curl, and related Hodge
   systems},
   language={English, with English and French summaries},
   journal={C. R. Math. Acad. Sci. Paris},
   volume={338},
   date={2004},
   number={7},
   pages={539--543},
   issn={1631-073X},
   review={\MR{2057026}},
   doi={10.1016/j.crma.2003.12.031},
}

\bib{BourgainBrezis2007}{article}{
   author={Bourgain, Jean},
   author={Brezis, Ha\"{\i}m},
   title={New estimates for elliptic equations and Hodge type systems},
   journal={J. Eur. Math. Soc. (JEMS)},
   volume={9},
   date={2007},
   number={2},
   pages={277--315},
   issn={1435-9855},
   review={\MR{2293957}},
   doi={10.4171/JEMS/80},
}

\bib{BVS}{article}{
   author={Brezis, Ha\"{\i}m},
   author={Van Schaftingen, Jean},
   title={Boundary estimates for elliptic systems with $L^1$-data},
   journal={Calc. Var. Partial Differential Equations},
   volume={30},
   date={2007},
   number={3},
   pages={369--388},
   issn={0944-2669},
   review={\MR{2332419}},
   doi={10.1007/s00526-007-0094-9},
}

\bib{Byun}{article}{
   author={Byun, Sun-Sig},
   title={Elliptic equations with BMO coefficients in Lipschitz domains},
   journal={Trans. Amer. Math. Soc.},
   volume={357},
   date={2005},
   number={3},
   pages={1025--1046},
   issn={0002-9947},
   review={\MR{2110431}},
   doi={10.1090/S0002-9947-04-03624-4},
}

\bib{Cianchi}{article}{
   author={Cianchi, Andrea},
   title={Moser-Trudinger trace inequalities},
   journal={Adv. Math.},
   volume={217},
   date={2008},
   number={5},
   pages={2005--2044},
   issn={0001-8708},
   review={\MR{2388084}},
   doi={10.1016/j.aim.2007.09.007},
}

\bib{Chen-Spector}{article}{
   author={Chen, You-Wei},
   author={Spector, Daniel},
   title={On functions of bounded $\beta$-dimensional mean oscillation},
   journal={Adv. Calc. Var.},
   doi={https://doi.org/10.1515/acv-2022-0084},
}

\bib{ChenOoiSpector}{article}{
   author={Chen, You-Wei Benson},
   author={Ooi, Keng Hao},
   author={Spector, Daniel},
   title={Capacitary maximal inequalities and applications},
   journal={J. Funct. Anal.},
   volume={286},
   date={2024},
   number={12},
   pages={Paper No. 110396, 31},
   issn={0022-1236},
   review={\MR{4729407}},
   doi={10.1016/j.jfa.2024.110396},
}

\bib{EvansGariepy}{book}{
   author={Evans, Lawrence C.},
   author={Gariepy, Ronald F.},
   title={Measure theory and fine properties of functions},
   series={Studies in Advanced Mathematics},
   publisher={CRC Press, Boca Raton, FL},
   date={1992},
   pages={viii+268},
   isbn={0-8493-7157-0},
   review={\MR{1158660}},
}

\bib{DHM}{article}{
   author={Dolzmann, Georg},
   author={Hungerb\"uhler, Norbert},
   author={M\"uller, Stefan},
   title={Non-linear elliptic systems with measure-valued right hand side},
   journal={Math. Z.},
   volume={226},
   date={1997},
   number={4},
   pages={545--574},
   issn={0025-5874},
   review={\MR{1484710}},
   doi={10.1007/PL00004354},
}

\bib{DHM1}{article}{
   author={Dolzmann, Georg},
   author={Hungerb\"{u}hler, Norbert},
   author={M\"{u}ller, Stefan},
   title={Uniqueness and maximal regularity for nonlinear elliptic systems
   of $n$-Laplace type with measure valued right hand side},
   journal={J. Reine Angew. Math.},
   volume={520},
   date={2000},
   pages={1--35},
   issn={0075-4102},
   review={\MR{1748270}},
   doi={10.1515/crll.2000.022},
}

\bib{FeffermanStein}{article}{
  author={Fefferman, C.},
 author={Stein, E. M.},
   title={$H^{p}$ spaces of several variables},
    journal={Acta Math.},
    volume={129},
    date={1972},
    number={3-4},
    pages={137--193},
    issn={0001-5962},
    review={\MR{447953}},
    doi={10.1007/BF02392215},
 }

 \bib{FG}{article}{
  author={Fortuna, M.},
 author={Garroni, A.},
   title={Homogenization of line tension energies},
    journal={https://arxiv.org/abs/2304.12132},
    volume={},
    date={},
    number={},
    pages={},
    issn={},
    review={},
    doi={},
 }

\bib{GHS}{article}{
author={Goodman, Jesse},
   author={Hernandez, Felipe},
   author={Spector, Daniel},
   title={Two Approximation Results for Divergence Free Measures},
   journal={Port. Math.},
   volume={},
   date={to appear},
   number={},
   pages={},
   issn={},
   review={},
   doi={
https://doi.org/10.48550/arXiv.2010.14079},
}
\bib{GIS}{article}{
    AUTHOR = {Greco, Luigi},
    author = {Iwaniec, Tadeusz},
    author = {Sbordone, Carlo},
     TITLE = {Inverting the {$p$}-harmonic operator},
   JOURNAL = {Manuscripta Math.},
    VOLUME = {92},
      YEAR = {1997},
    NUMBER = {2},
     PAGES = {249--258},
      ISSN = {0025-2611,1432-1785},
       DOI = {10.1007/BF02678192},
       URL = {https://doi.org/10.1007/BF02678192},
}

\bib{HRS}{article}{
   author={Hernandez, Felipe},
   author={Rai\c{t}\u{a}, Bogdan},
   author={Spector, Daniel},
   title={Endpoint $L^1$ estimates for Hodge systems},
   journal={Math. Ann.},
   volume={385},
   date={2023},
   number={3-4},
   pages={1923--1946},
   issn={0025-5831},
   review={\MR{4566709}},
   doi={10.1007/s00208-022-02383-y},
}

\bib{HS}{article}{
   author={Hernandez, Felipe},
   author={Spector, Daniel},
   title={Fractional integration and optimal estimates for elliptic systems},
   journal={Calc. Var. Partial Differential Equations},
   volume={63},
   date={2024},
   number={5},
   pages={117},
   issn={0944-2669},
   review={\MR{4739434}},
   doi={10.1007/s00526-024-02722-8},
}

\bib{JK}{article}{
   author={Jerison, David},
   author={Kenig, Carlos E.},
   title={The inhomogeneous Dirichlet problem in Lipschitz domains},
   journal={J. Funct. Anal.},
   volume={130},
   date={1995},
   number={1},
   pages={161--219},
   issn={0022-1236},
   review={\MR{1331981}},
   doi={10.1006/jfan.1995.1067},
}

\bib{MS}{article}{
   author={Mart\'{\i}nez, \'{A}ngel D.},
   author={Spector, Daniel},
   title={An improvement to the John-Nirenberg inequality for functions in
   critical Sobolev spaces},
   journal={Adv. Nonlinear Anal.},
   volume={10},
   date={2021},
   number={1},
   pages={877--894},
   issn={2191-9496},
   review={\MR{4191703}},
   doi={10.1515/anona-2020-0157},
}

\bib{MS2024}{article}{
      title={A note on limiting Calderon-Zygmund theory for transformed $n$-Laplace systems in divergence form}, 
      author={Dorian Martino and Armin Schikorra},
      year={2024},
      eprint={2404.01922},
}

\bib{maz_trace}{article}{
   author={Maz\cprime ja, V. G.},
   title={Strong capacity-estimates for ``fractional'' norms},
   note={Numerical methods and questions on organization of computations},
   language={Russian},
   journal={Zap. Nau\v{c}n. Sem. Leningrad. Otdel. Mat. Inst. Steklov.
   (LOMI)},
   volume={70},
   date={1977},
   pages={161--168, 292},
   review={\MR{0500118}},
}

\bib{Meyers-Ziemer}{article}{
   author={Meyers, Norman G.},
   author={Ziemer, William P.},
   title={Integral inequalities of Poincar\'{e} and Wirtinger type for BV
   functions},
   journal={Amer. J. Math.},
   volume={99},
   date={1977},
   number={6},
   pages={1345--1360},
   issn={0002-9327},
   review={\MR{0507433}},
   doi={10.2307/2374028},
}

\bib{Moser1960}{article}{
   author={Moser, J\"{u}rgen},
   title={A new proof of De Giorgi's theorem concerning the regularity
   problem for elliptic differential equations},
   journal={Comm. Pure Appl. Math.},
   volume={13},
   date={1960},
   pages={457--468},
   issn={0010-3640},
   review={\MR{170091}},
   doi={10.1002/cpa.3160130308},
}

\bib{Peetre}{article}{
   author={Peetre, Jaak},
   title={On convolution operators leaving $L^{p,}\,^{\lambda }$ spaces
   invariant},
   journal={Ann. Mat. Pura Appl. (4)},
   volume={72},
   date={1966},
   pages={295--304},
   issn={0003-4622},
   review={\MR{209917}},
   doi={10.1007/BF02414340},
}

\bib{Petteri_2023}{article}{
   author={Harjulehto, Petteri},
   author={Hurri-Syrj\"{a}nen, Ritva},
   title={On Choquet integrals and Poincar\'{e}-Sobolev inequalities},
   journal={J. Funct. Anal.},
   volume={284},
   date={2023},
   number={9},
   pages={Paper No. 109862, 18},
   issn={0022-1236},
   review={\MR{4545158}},
   doi={10.1016/j.jfa.2023.109862},
}

\bib{PonceSpector}{article}{
   author={Ponce, Augusto C.},
   author={Spector, Daniel},
   title={Some remarks on capacitary integrals and measure theory},
   conference={
      title={Potentials and partial differential equations---the legacy of
      David R. Adams},
   },
   book={
      series={Adv. Anal. Geom.},
      volume={8},
      publisher={De Gruyter, Berlin},
   },
   isbn={978-3-11-079265-2},
   isbn={978-3-11-079272-0},
   isbn={978-3-11-079278-2},
   date={[2023] \copyright2023},
   pages={235--263},
   review={\MR{4654520}},
}

\bib{RSS}{article}{
   author={Rai\c{t}\u{a}, Bogdan},
   author={Spector, Daniel},
   author={Stolyarov, Dmitriy},
   title={A trace inequality for solenoidal charges},
   journal={Potential Anal.},
   volume={59},
   date={2023},
   number={4},
   pages={2093--2104},
   issn={0926-2601},
   review={\MR{4684387}},
   doi={10.1007/s11118-022-10008-x},
}

\bib{DS}{article}{
   author={Spector, Daniel},
   author={Stolyarov, Dmitriy},
   title={On dimension stable spaces of measures},
   journal={https://arxiv.org/abs/2405.10728},
}

   \bib{Stolyarov}{article}{
   author={Stolyarov, D. M.},
   title={Hardy-Littlewood-Sobolev inequality for $p=1$},
   language={Russian, with Russian summary},
   journal={Mat. Sb.},
   volume={213},
   date={2022},
   number={6},
   pages={125--174},
   issn={0368-8666},
   review={\MR{4461456}},
   doi={10.4213/sm9645},
}

 \bib{Trudinger}{article}{
   author={Trudinger, Neil S.},
   title={On imbeddings into Orlicz spaces and some applications},
   journal={J. Math. Mech.},
   volume={17},
   date={1967},
   pages={473--483},
   review={\MR{0216286}},
   doi={10.1512/iumj.1968.17.17028},
}

\bib{VS3}{article}{
   author={Van Schaftingen, Jean},
   title={Limiting Sobolev inequalities for vector fields and canceling
   linear differential operators},
   journal={J. Eur. Math. Soc. (JEMS)},
   volume={15},
   date={2013},
   number={3},
   pages={877--921},
   issn={1435-9855},
   review={\MR{3085095}},
   doi={10.4171/JEMS/380},
}

\bib{Yudovich}{article}{
   author={Judovi\v{c}, V. I.},
   title={Some estimates connected with integral operators and with
   solutions of elliptic equations},
   language={Russian},
   journal={Dokl. Akad. Nauk SSSR},
   volume={138},
   date={1961},
   pages={805--808},
   issn={0002-3264},
   review={\MR{0140822}},
}

\bib{Ziemerbook}{book}{
   author={Ziemer, William P.},
   title={Weakly differentiable functions},
   series={Graduate Texts in Mathematics},
   volume={120},
   note={Sobolev spaces and functions of bounded variation},
   publisher={Springer-Verlag, New York},
   date={1989},
   pages={xvi+308},
   isbn={0-387-97017-7},
   review={\MR{1014685}},
   doi={10.1007/978-1-4612-1015-3},
}

\end{biblist}
\end{bibdiv}

\end{document}